\theoremstyle{plain}
\newtheorem{theorem}{Theorem}[section]
\newtheorem{corollary}[theorem]{Corollary}
\newtheorem{lemma}[theorem]{Lemma}
\newtheorem{proposition}[theorem]{Proposition}
\theoremstyle{definition}
\newtheorem{remark}[theorem]{Remark}
\newtheorem{definition}[theorem]{Definition}
\title{$KR$-theory of compact Lie groups with group anti-involutions}
\author{Chi-Kwong Fok}
\date{August 16, 2015}
\begin{document}
\maketitle
\begin{abstract}
	Let $G$ be a compact, connected, and simply-connected Lie group, equipped with an anti-involution $a_G$ which is the composition of a Lie group involutive automorphism $\sigma_G$ and the group inversion. We view $(G, a_G)$ as a Real $(G, \sigma_G)$-space via the conjugation action. In this note, we exploit the notion of Real equivariant formality discussed in \cite{Fo} to compute the ring structure of the equivariant $KR$-theory of $G$. In particular, we show that when $G$ does not have Real representations of complex type, the equivariant $KR$-theory is the ring of Grothendieck differentials of the coefficient ring of equivariant $KR$-theory over the coefficient ring of ordinary $KR$-theory, thereby generalizing a result of Brylinski-Zhang's (\cite{BZ}) for the complex $K$-theory case. \\
	\\
	\emph{Keywords}: $KR$-theory, compact Lie group, Real equivariant formality, Real representation ring\\
	\\
	\emph{2010 Mathematics Subject Classification}: 19L47; 57T10
\end{abstract}

\section{Introduction}
Let $G$ be a compact, connected and simply-connected Lie group, viewed as a $G$-space via the conjugation action. According to the main result of \cite{BZ}, the equivariant $K$-theory ring $K_G^*(G)$ is isomorphic to $\Omega_{R(G)/\mathbb{Z}}$, the ring of Grothendieck differentials of the complex representation ring of $G$ over the integers (in fact, Brylinski-Zhang proved that this is true for $\pi_1(G)$ being torsion-free). Assuming further that $G$ is equipped with an involutive automorphism $\sigma_G$, the author gave in \cite{Fo} an explicit description of the ring structure of the equivariant $KR$-theory (cf. \cite{At2}, \cite{At3} and \cite{AS} for definition of $KR$-theory) $KR^*_{G}(G)$ by drawing on Brylinski-Zhang's result, Seymour's result on the module structure of $KR^*(G)$ (cf. \cite{Se}) and the notion of Real equivariant formality. $KR_{G}^*(G)$ in general has far more complicated ring structure and, among other things, is not a ring of Grothendieck differentials, as one would expect from Brylinski-Zhang's theorem. This is because in general the algebra generators of the equivariant $KR$-theory ring do not simply square to 0. 

In this note, we equip $G$ instead with an anti-involution $a_G:=\sigma_G\circ \text{inv}$. Denoting the $(G, \sigma_G)$-space $(G, a_G)$ by $G^-$ for brevity, we compute the ring structure of $KR^*_{G}(G^-)$ following the idea of \cite{Fo}. We find that there exists a derivation of the graded ring $KR^0_{G}(\text{pt})\oplus KR^{-4}_{G}(\text{pt})$ taking values in $KR^1_{G}(G^-)\oplus KR_{G}^{-3}(G^-)$ (cf. Proposition \ref{antider}) and that any element in the image of the derivation squares to 0 (see Propositions \ref{cpxvecbdleinv}, \ref{antider} and \ref{sqzeroantibz}(\ref{sqzero}), and compare with \cite[Theorem 4.30, Proposition 4.31]{Fo}). In particular,
\begin{theorem}
	If $G$ does not have any Real representation of complex type with respect to $\sigma_G$, then the derivation in Proposition \ref{antider} induces the following ring isomorphism
	\[KR^*_{G}(G^-)\cong\Omega_{KR^*_{G}(\text{pt})/KR^*(\text{pt})}\]
\end{theorem}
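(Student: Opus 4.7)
The plan is to realize the derivation from Proposition \ref{antider} as the universal derivation into a ring of Grothendieck differentials, and then verify that the resulting comparison map is an isomorphism by combining Real equivariant formality with Seymour's description of $KR^*(G^-)$.

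By Proposition \ref{antider} there is a $KR^*(\text{pt})$-linear derivation
\[ d: KR^0_G(\text{pt}) \oplus KR^{-4}_G(\text{pt}) \longrightarrow KR^1_G(G^-) \oplus KR^{-3}_G(G^-), \]
and by Proposition \ref{sqzeroantibz}(\ref{sqzero}) every element in its image squares to zero in $KR^*_G(G^-)$. Under the no-complex-type hypothesis, the full coefficient ring $KR^*_G(\text{pt})$ is generated as a graded $KR^*(\text{pt})$-algebra by $KR^0_G(\text{pt}) \oplus KR^{-4}_G(\text{pt})$ (the real- and quaternionic-type Real representations), so I would first extend $d$ uniquely to a $KR^*(\text{pt})$-linear derivation on all of $KR^*_G(\text{pt})$ by the Leibniz rule. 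The universal property of Grothendieck differentials then furnishes a graded $KR^*_G(\text{pt})$-algebra homomorphism
\[ \Phi:\Omega_{KR^*_G(\text{pt})/KR^*(\text{pt})} \longrightarrow KR^*_G(G^-), \]
using the square-zero property to pass from $\Omega^1$ to the full exterior algebra.

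To see that $\Phi$ is an isomorphism I would invoke Real equivariant formality of $(G,a_G)$, established in \cite{Fo}, which yields a $KR^*_G(\text{pt})$-module isomorphism $KR^*_G(G^-)\cong KR^*_G(\text{pt})\otimes_{KR^*(\text{pt})} KR^*(G^-)$. Seymour's theorem in \cite{Se} identifies $KR^*(G^-)$ with an exterior algebra over $KR^*(\text{pt})$ on generators $\beta(\rho)$ indexed by a basis of Real representations of $G$. Under the no-complex-type hypothesis these generators sit in degrees $1$ and $-3$ only, matching the target degrees of $d$. On the other side, once $KR^*_G(\text{pt})$ is a graded polynomial-like algebra on the real and quaternionic classes, $\Omega_{KR^*_G(\text{pt})/KR^*(\text{pt})}$ is also an exterior $KR^*_G(\text{pt})$-algebra on the same index set, with basis $\{d\rho\}$. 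I would then check that $\Phi$ carries $d\rho$ to the Seymour generator $\beta(\rho)$, which identifies a free basis with a free basis and settles both injectivity and surjectivity.

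The main obstacle is precisely this last matching, i.e.\ verifying that the geometrically defined $d\rho$ from Proposition \ref{antider} coincides with the Seymour generator $\beta(\rho)$ under the Real equivariant formality isomorphism, in both the real and the quaternionic case. This will require a careful comparison of explicit cocycle/bundle representatives: tracking $d\rho$ through the clutching construction used in \ref{antider} and identifying the result in the Seymour basis, separately in degree $1$ and degree $-3$. The no-complex-type hypothesis is essential at this stage, since complex-type Real representations would produce additional Seymour generators whose putative $d$-preimages would live in odd-degree components of $KR^*_G(\text{pt})$ that are not captured by $KR^0_G(\text{pt})\oplus KR^{-4}_G(\text{pt})$, breaking the clean K\"ahler-differential description exactly as anticipated in the introduction.
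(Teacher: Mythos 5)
Your proposal follows essentially the same route as the paper: the derivation of Proposition \ref{antider} plus the square-zero property of Proposition \ref{sqzeroantibz}(\ref{sqzero}) give the comparison map out of the ring of Grothendieck differentials, and the identification of a free basis with a free basis is exactly what the paper packages into Corollary \ref{antimodstrnocpx} via Real equivariant formality and Seymour's computation of $KR^*(G^-)$, whose generators are by construction the classes $\delta_\mathbb{R}^{\text{inv}}(\varphi_i)$, $\delta_\mathbb{H}^{\text{inv}}(\theta_j)$ lifted equivariantly. The generator-matching you flag as the main obstacle is thus already built into the paper's module-structure theorem rather than being a separate verification.
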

Hence an anti-involution is the `right' involution needed to generalize Brylinski-Zhang's result in the context of $KR$-theory. As a by-product, we also obtain the following
\begin{corollary}
	If $G$ is a compact connected Real Lie group (not necessarily simply-connected) and $X$ a compact Real $G$-space, then for any $x$ in $KR^1_G(X)$ or $KR_G^{-3}(X)$, $x^2=0$. 
\end{corollary}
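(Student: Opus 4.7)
The plan is to deduce the Corollary from a universal-example reduction to Proposition \ref{sqzeroantibz}(\ref{sqzero}). First I would invoke the graded commutativity of the Real equivariant $KR$-theory ring (which follows from $KR_G$ being a $\mathbb{Z}/2$-equivariant $E_\infty$ ring spectrum): for any $x$ in an odd-degree component, we have $xy=(-1)^{|x||y|}yx$, so $2x^2=0$. Thus $x^2$ is 2-torsion, lying in $KR^2_G(X)$ when $x\in KR^1_G(X)$ and in $KR^{-6}_G(X)$ when $x\in KR^{-3}_G(X)$. Notice also that these two coefficient groups vanish non-equivariantly at a point ($KO^2=KO^{-6}=0$), which at least removes the ``free'' part of $x^2$ right away. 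The remaining task is to rule out the residual 2-torsion.

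Next I would use representability of equivariant $KR$-theory in the Real $G$-equivariant category to write $x=f^{*}u$ for a Real $G$-equivariant classifying map $f\colon X\to\mathcal{B}$ and a universal class $u\in KR^1_G(\mathcal{B})$ (respectively $u\in KR^{-3}_G(\mathcal{B})$), where $\mathcal{B}$ is an appropriate Real $G$-equivariant classifying space---for instance, a space of Real equivariant skew-adjoint Fredholm operators on a universe of Real Hilbert $G$-modules, in the spirit of Atiyah--Singer. By naturality of the product, $x^2=f^{*}(u^2)$, so it is enough to prove $u^2=0$ on $\mathcal{B}$.

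The crucial step is then to verify $u^2=0$ for the universal class. The idea is that the argument proving Proposition \ref{sqzeroantibz}(\ref{sqzero})---which constructs an explicit null-homotopy for squares of classes in the image of the derivation $\partial$ on $KR^*_G(G^{-})$---relies only on (i)~the Real structure of the underlying vector bundle and (ii)~the multiplicativity of $KR_G$. Neither ingredient depends on the specific geometry of $G^{-}$ or on $G$ being simply-connected. I would therefore replay that null-homotopy construction with $u$ in place of $\partial[V]$: decompose $u$ into ``elementary'' pieces indexed by Real representations of the appropriate Clifford algebras (using Atiyah--Bott--Shapiro), verify squaring-to-zero of each piece by the same Real-structure calculation used in Propositions \ref{cpxvecbdleinv} and \ref{antider}, and dispose of cross terms using graded commutativity plus a parity argument on Clifford modules.

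The main obstacle is this final step: showing that the null-homotopy of Proposition \ref{sqzeroantibz}(\ref{sqzero}) truly generalises universally rather than depending on some particular feature of $G^{-}$. Should the direct generalisation fail, an alternative route is to realise $x$ as $\partial \tilde{x}$ for some $\tilde{x}$ defined on the product $X\times G^{-}$ via an integration-along-the-fibre (or slant-product) construction with respect to the projection $X\times G^{-}\to X$, and then apply Proposition \ref{sqzeroantibz}(\ref{sqzero}) to $\partial \tilde{x}$ directly before pulling back to $X$.
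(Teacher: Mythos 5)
Your reduction to a universal class is the right instinct, but the proposal has a genuine gap at exactly the step you flag as the ``main obstacle,'' and that step is the whole content of the corollary. Theorem \ref{sqzeroantibz}(\ref{sqzero}) only kills squares of the special classes $\delta_\mathbb{F}^{G,\text{inv}}(\rho)$, i.e.\ classes pulled back from $(U(n),a_\mathbb{F})$ along a representation $\rho$; it says nothing about an arbitrary element of $KR^1_G(X)$. Your plan is to pass to a $G$-equivariant classifying space $\mathcal{B}$ and prove $u^2=0$ for the universal class there by ``replaying the null-homotopy,'' but no such null-homotopy exists in the paper for the equivariant universal class: the argument behind Proposition \ref{sqzerounitary} goes through the restriction to a maximal torus and Lemma \ref{circlesqzero}, which is specific to $(U(n),\sigma_\mathbb{F})$ acting on itself, not to a space of Real equivariant Fredholm operators. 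The alternative route via integration along the fibre of $X\times G^-\to X$ is likewise not substantiated. The preliminary observations (graded commutativity gives $2x^2=0$; $KO^2=KO^{-6}=0$) do not close the gap either, since the residual 2-torsion is precisely what must be excluded, and the vanishing of the point coefficients says nothing about $KR^2_G(X)$ for general $X$.

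The paper's mechanism for getting from the representation-theoretic classes to arbitrary classes is different and worth internalizing. One pulls back along $X\times EG^n\to X$, where $EG^n$ is the $n$-fold join of $G$ with its induced Real structure. The action on $X\times EG^n$ is free, so $KR^*_G(X\times EG^n)\cong KR^*(X\times EG^n/G)$ is \emph{non-equivariant} $KR$-theory of a \emph{compact} space, and there Lemma \ref{anticlassifying} applies: every class in $KR^1$ or $KR^{-3}$ is pulled back from some finite $(U(k_n),a_\mathbb{F})$, where the square is zero by Proposition \ref{sqzerounitary} after forgetting the equivariant structure. The price is that one must show nothing is lost in passing to the limit, i.e.\ that $\pi^*=\varprojlim\pi_n^*$ is injective; this follows from the Real analogue of the Atiyah--Segal completion argument, $\ker(\pi^*)=\bigcap_n IR^n\cdot KR^*_G(X)$, together with $\bigcap_n I^n=(0)$ and the injectivity of $RR(G)\to R(G)$. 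This is where compactness and connectedness of $G$ (and compactness of $X$) actually enter. Without an injectivity statement of this kind, or an honest proof that the universal equivariant class squares to zero, your argument does not go through.
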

Note that graded commutativity only implies that $x^2$ is 2-torsion. 

Throughout this note, $G$ is a compact, connected and simply-connected Lie group unless otherwise specified. We sometimes omit the notation for the involution when it is clear from the context that a Real structure is implicitly assumed.

\section{Background}
In this section, we recall some relevant definitions and results from \cite{BZ} and \cite{Fo} needed in this note. We refer the reader to \cite{At2}, \cite{At3} and \cite{AS} for the basic definition of (equivariant) $KR$-theory, which we shall omit here.
\begin{definition} Let $G$ be a compact Lie group equipped with an involutive automorphism $\sigma_G$, i.e. a Real compact Lie group, and $X$ a finite $CW$-complex equipped with an involution. 
	\begin{enumerate}
		\item(cf. \cite[Proposition 2.29]{Fo}) Let $c: KR_G^*(X)\to K_G^*(X)$ be the \emph{complexification map} which forgets the Real structure of Real vector bundles, and $r: K_G^*(X)\to KR_G^*(X)$ be the \emph{realification map} defined by
		\[[E]\mapsto [E\oplus\sigma_X^*\sigma_G^*\overline{E}]\]
		where $\sigma_G^*$ means twisting the original $G$-action on $E$ by $\sigma_G$.
		\item(cf. \cite[Definitions 2.1 and 2.5]{Fo}) Let $\delta: R(G)\to K^{-1}(G)$ be the derivation of $R(G)$ taking values in the $R(G)$-module $K^{-1}(G)$ (the module structure is realized by the augmentation homomorphism), where $\delta(\rho)$ is represented by the complex of vector bundles
		\begin{align*}
			0\to G\times\mathbb{R}\times V &\to G\times\mathbb{R}\times V\to 0,\\
			(g, t, v) &\mapsto\  (g, t, -t\rho(g)v) \text{ if } t\geq 0,\\
			(g, t, v) &\mapsto\  (g, t, tv) \text{ if }t\leq 0.
		\end{align*}
		We define $\delta_G: R(G)\to K^{-1}_G(G)$ similarly. $\delta_G(\rho)$ is represented by the same complex of vector bundles where $G$ acts on $G\times\mathbb{R}\times V$ by $g_0\cdot(g_1, t, v)=(g_0g_1g_0^{-1}, t, \rho_V(g_0)v)$.		
		\item Let $\sigma_n$ be the class of the standard representation of $U(n)$ in $R(U(n))$. Let $T$ be the standard maximal torus of $U(n)$. Let $\sigma_\mathbb{R}$ be the complex conjugation on $U(n)$, $T$, $U(n)/T$ or $U(\infty)$. Let $\sigma_\mathbb{H}$ be the symplectic type involution on $U(2m)$ (given by $g\mapsto J_m\overline{g}J_m^{-1}$), $U(2\infty)$ or the involution $gT\mapsto J_m\overline{g}T$ on $U(2m)/T$. Let $a_\mathbb{R}$ and $a_\mathbb{H}$ be the corresponding anti-involutions on the unitary groups. 
		\item(cf. \cite[Definitions 2.9 and 2.20]{Fo}) A Real representation $V$ of $G$ is a finite-dimensional complex representation of $G$ equipped with an anti-linear involution $\sigma_V$ such that $\sigma_V(g\cdot v)=\sigma_G(g)\cdot\sigma_V(v)$. Similarly a Quaternionic representation is one equipped with an anti-linear endomorphism $J_V$ such that $J_V^2=-\text{Id}_V$ and $J_V(g\cdot v)=\sigma_G(g)\cdot J_V(v)$. For $\mathbb{F}=\mathbb{R}$ or $\mathbb{H}$, a morphism between $V$ and $W\in \mathcal{R}ep_\mathbb{F}(G)$ is a linear transformation from $V$ to $W$ which commutes with $G$ and respects both $\sigma_V$ and $\sigma_W$. Let $\mathcal{R}ep_\mathbb{R}(G)$ (resp. $\mathcal{R}ep_\mathbb{H}(G)$) be the category of Real (resp. Quaternionic) representations of $G$. The Real (resp. Quaternionic) representation group of $G$, denoted by $RR(G)$ (resp. $RH(G)$) is the Grothendieck group of $\mathcal{R}ep_\mathbb{R}(G)$ (resp. $\mathcal{R}ep_\mathbb{H}(G)$). 
		\item(cf. \cite[Definitions 2.11 and 2.20]{Fo}) Let $V$ be an irreducible Real (resp. Quaternionic) representation of $G$. Its \emph{commuting field} is defined to be $\text{Hom}_G(V, V)^{\sigma_V}$, which is isomorphic to either $\mathbb{R}$, $\mathbb{C}$ or $\mathbb{H}$. Let $RR(G, \mathbb{F})$ (resp. $RH(G, \mathbb{F})$) be the abelian group generated by the isomorphism classes of irreducible Real (resp. Quaternionic) representations with $\mathbb{F}$ as the commuting field.
		\item(cf. \cite[Definition 2.19]{Fo}) Let $R(G, \mathbb{C})$ be the abelian group generated by the isomorphism classes of those irreducible complex representations $V$ satisfying $V\ncong\sigma_G^*\overline{V}$. 
	\end{enumerate}
\end{definition}

\begin{theorem}\label{eqderivation}
	Let $G$ be a compact connected Lie group with torsion-free fundamental group. 
	\begin{enumerate}
		\item \cite{Ho} $K^*(G)$ is a free $\mathbb{Z}$-module. If $P$ is the image of the map $\delta: R(G)\to K^{-1}(G)$, then 
		\[K^*(G)\cong \bigwedge\nolimits_\mathbb{Z}^* P\]
		\item \cite{BZ} Let $\varphi: \Omega_{R(G)/\mathbb{Z}}\to K_G^*(G)$ be the $R(G)$-algebra homomorphism defined by the following
	\begin{enumerate}
		\item $\varphi(\rho_V):=[G\times V]\in K_G^*(G)$, where~$G$ acts on $G\times V$ by $g_0\cdot(g_1, v)=(g_0g_1g_0^{-1}, \rho_V(g_0)v)$, and
		\item $\varphi(d\rho_V)=\delta_G(\rho)$. 
	\end{enumerate}
	Then $\varphi$ is an $R(G)$-algebra isomorphism.
	\end{enumerate}
\end{theorem}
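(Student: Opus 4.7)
The plan is to treat the two parts separately, as they correspond to two classical results cited from Hodgkin and Brylinski--Zhang respectively.

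For part (1), I would first verify that $\delta$, defined by the explicit Bott-type complex in the excerpt, is a derivation with respect to the $R(G)$-module structure on $K^{-1}(G)$ coming from the augmentation $\varepsilon\colon R(G)\to\mathbb{Z}$. The Leibniz rule is checked by direct comparison of the complex representing $\delta(\rho_1\otimes\rho_2)$ with $\delta(\rho_1)\cdot\varepsilon(\rho_2)+\varepsilon(\rho_1)\cdot\delta(\rho_2)$, using that only the augmentation-trivial part of each factor contributes. Next I would equip $K^*(G)$ with the Hopf algebra structure coming from group multiplication $\mu\colon G\times G\to G$; elements of $P=\operatorname{im}\delta$ are primitive of odd degree. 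Since $\pi_1(G)$ is torsion-free, the Künneth theorem for $K$-theory applies and reduces the claim to the case of simple simply-connected factors and a torus. For each factor, the Atiyah--Hirzebruch spectral sequence collapses (because $H^*(G;\mathbb{Z})$ is torsion-free and is an exterior algebra on odd-degree generators by Hopf's theorem), and the fundamental primitives are matched with $\delta$ applied to fundamental representations. This identifies $K^*(G)$ with $\bigwedge_\mathbb{Z}^*P$ and shows freeness.

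For part (2), I would first verify that $\varphi$ is a well-defined $R(G)$-algebra map. The rule $\rho_V\mapsto[G\times V]$ respects ring operations by construction, $\delta_G$ is a derivation by the equivariant analogue of the computation in part (1), and $\delta_G(\rho)^2=0$ in $K_G^*(G)$ (which allows $\varphi$ to factor through the exterior algebra $\Omega_{R(G)/\mathbb{Z}}$ rather than the symmetric algebra on the module of Kähler differentials) follows from graded-commutativity combined with a direct exterior-square computation on the defining complex. To establish that $\varphi$ is an isomorphism, I would follow Brylinski--Zhang and exploit the forgetful map $K_G^*(G)\to K^*(G)$, which intertwines $\varphi$ with the isomorphism of part (1). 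A filtration by powers of the augmentation ideal $I\subset R(G)$ identifies the associated graded of $K_G^*(G)$ with that of $\Omega_{R(G)/\mathbb{Z}}$ via part (1), and a Nakayama-type lemma together with the Atiyah--Segal completion theorem (to handle the $I$-adic completeness carefully) promotes the fibrewise isomorphism to an $R(G)$-linear isomorphism.

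The main obstacle is precisely this last step, promoting the non-equivariant statement of part (1) to the equivariant statement of part (2). Surjectivity of $\varphi$ is relatively straightforward once one verifies that the generators $[G\times V]$ and $\delta_G(\rho_V)$ exhaust $K_G^*(G)$ as an $R(G)$-algebra. Injectivity is the subtler point, because one must rule out hidden $R(G)$-torsion or failure of freeness in $K_G^*(G)$; this requires either a careful analysis of the Hodgkin spectral sequence in the equivariant setting or the construction of an explicit $R(G)$-linear splitting, both of which are technical. Once the equivariant isomorphism is established, the identifications recorded in the theorem provide the computational input for the main $KR$-theoretic constructions carried out in the rest of the paper.
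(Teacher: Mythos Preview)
The paper does not supply a proof of this theorem at all: it appears in Section~2 (``Background''), is attributed directly to Hodgkin \cite{Ho} and Brylinski--Zhang \cite{BZ}, and is simply quoted as input for the later $KR$-theoretic computations. So there is no paper-proof to compare your proposal against; you have gone well beyond what the paper does by sketching the arguments of the original references.

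That said, your outline is a reasonable summary of how those original proofs proceed, with one caveat worth flagging. For the well-definedness of $\varphi$ in part~(2) you need $\delta_G(\rho)^2=0$, and you write that this ``follows from graded-commutativity combined with a direct exterior-square computation on the defining complex.'' Be careful here: graded commutativity alone only yields $2\delta_G(\rho)^2=0$, as the paper itself emphasizes in the Introduction (``graded commutativity only implies that $x^2$ is 2-torsion''). You therefore cannot appeal to torsion-freeness of $K_G^*(G)$ without circularity, since that is part of what the isomorphism establishes. The honest route is the one you allude to in the second half of your sentence: an explicit homotopy on the tensor square of the defining complex, or alternatively pulling back from the universal case $G=U(n)$ where the vanishing can be checked by restriction to the maximal torus. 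Brylinski--Zhang in fact organize the argument so that the square-zero relation and the module isomorphism are established together via their spectral-sequence comparison, rather than verifying $\delta_G(\rho)^2=0$ as a separate preliminary step; your sketch of the injectivity argument (Hodgkin spectral sequence, $I$-adic filtration) is in the right spirit but compresses a genuinely technical portion of their paper.
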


\begin{definition}\label{equivformal}
	\begin{enumerate}
		\item (\cite[Definition 4.1]{HL}) A $G$-space $X$ is \emph{weakly equivariantly formal} if the map $K^*_G(X)\otimes_{R(G)}\mathbb{Z}\to K^*(X)$ induced by the forgetful map is a ring isomorphism, where $\mathbb{Z}$ is viewed as an $R(G)$-module through the augmentation homomorphism. 
		\item \label{realequivformaldef}(\cite[Definition 4.2]{Fo}) A Real $G$-space $X$ is a \emph{Real equivariantly formal} space if 
		\begin{enumerate}
			\item $G$ is a Real compact Lie group,
			\item $X$ is a weakly equivariantly formal $G$-space, and
			\item the forgetful map $KR_G^*(X)\to KR^*(X)$ admits a section $s_R: KR^*(X)\to KR_G^*(X)$ which is a $KR^*(\text{pt})$-module homomorphism.
		\end{enumerate}
		\item (\cite[Definition 4.4]{Fo})For a section $s: K^*(X)\to K_G^*(X)$ (resp. $s_R: KR^*(X)\to KR^*_G(X)$) and $a\in K^*(X)$ (resp. $a\in KR^*(X)$), we call $s(a)$ (resp. $s_R(a)$) a \emph{(Real) equivariant lift} of $a$, with respect to $s$ (resp. $s_R$). 
	\end{enumerate}
\end{definition}
\begin{remark}\label{gequivformal}
	By Theorem \ref{eqderivation} and noting that the forgetful map takes $\delta_G(\rho)\in K_G^{-1}(G)$ to $\delta(\rho)\in K^{-1}(G)$, we have that $G$ is weakly equivariantly formal if it is a compact connected Lie group with torsion-free fundamental group. 
\end{remark}
The following result is a structure theorem for the equivariant $KR$-theory of Real equivariantly formal spaces.
\begin{theorem}\label{equivstrthm}\textnormal(\cite[Theorem 4.5]{Fo}\textnormal) Let $X$ be a~Real equivariantly formal space. For any element $a\in K^*(X)$ $($resp. $a\in KR^*(X))$, let $a_G\in K_G^*(X)$ $($resp. $a_G\in KR_G^*(X))$ be a~$($Real$)$ equivariant lift of $a$ with respect to a group homomorphic section $s$ $($resp.~$s_R$ which is a~$KR^*(\text{pt})$-module homomorphism$)$. Then the map
\begin{align*}
 f:  \ (RR(G, \mathbb{R})\oplus RH(G, \mathbb{R}))\otimes KR^*(X)\oplus r(R(G, \mathbb{C})\otimes K^*(X)) & \to  KR_G^*(X),
\\
\rho_1\otimes a_1\oplus r(\rho_2\otimes a_2) & \mapsto  \rho_1\cdot (a_1)_{G}\oplus r(\rho_2\cdot (a_2)_G).
\end{align*}
is a group isomorphism. In particular, if $R(G, \mathbb{C})=0$, then $f$ is a~$KR_G^*(\text{pt})$-module isomorphism.
\end{theorem}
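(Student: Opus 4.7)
The plan is to first establish the structure theorem at the coefficient level $X=\text{pt}$, where it amounts to a graded decomposition of the Real representation ring $RR(G)$ as a $KR^*(\text{pt})$-module, and then propagate that decomposition to arbitrary Real equivariantly formal $X$ using the two sections afforded by the hypothesis. For $X=\text{pt}$, the map reduces to
\[(RR(G,\mathbb{R})\oplus RH(G,\mathbb{R}))\otimes KR^*(\text{pt})\oplus r(R(G,\mathbb{C}))\to KR_G^*(\text{pt}),\]
and I would verify it is an isomorphism by sorting irreducible Real and Quaternionic representations according to their commuting field (the Frobenius--Schur trichotomy for $(G,\sigma_G)$), invoking Bott periodicity to place quaternionic-type generators in degree $-4$, and identifying the complex-type piece with the image of $r$ applied to classes $[V]$ with $V\ncong\sigma_G^*\overline{V}$.

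For general $X$, I would define $f$ using both sections: on the Real/Quaternionic summand, send $\rho_1\otimes a_1$ to $\rho_1\cdot s_R(a_1)$, using the $KR^*(\text{pt})$-linearity of $s_R$ together with the $KR_G^*(\text{pt})$-module structure on $KR_G^*(X)$; on the complex-type summand, send $r(\rho_2\otimes a_2)$ to $r(\rho_2\cdot s(a_2))$, where $s$ is the group-homomorphic section of $K_G^*(X)\to K^*(X)$ supplied by weak equivariant formality. Naturality and multiplicativity of $r$ make this well-defined. Surjectivity is then tackled by applying $c\colon KR_G^*(X)\to K_G^*(X)$ to a given class $\xi$: since weak equivariant formality together with $s$ yields $K_G^*(X)\cong R(G)\otimes_{\mathbb{Z}} K^*(X)$, the image $c(\xi)$ decomposes into the three representation-type summands of $R(G)$; real- and quaternionic-type pieces lift canonically through $s_R$, while complex-type pieces are captured by the image of $r$, and subtracting these lifts from $\xi$ leaves a class killed by $c$ that can be handled using the identity $c\circ r=1+(\sigma_X\sigma_G)^*$ on $K_G^*(X)$.

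Injectivity proceeds by a parallel analysis: the three Frobenius--Schur summands of $R(G)$ split $K_G^*(X)$ as abelian groups through the section $s$, so any element of $\ker f$ pushed through $c$ detects its real/quaternionic and its realified complex contributions separately; the $KR^*(\text{pt})$-linearity of $s_R$ together with the fact that $r$ is injective on $R(G,\mathbb{C})\otimes K^*(X)$ (since $c\circ r$ there is $1+\sigma^*$ which has no fixed complex-type elements) then forces $\ker f=0$. The concluding clause about $R(G,\mathbb{C})=0$ yielding a $KR_G^*(\text{pt})$-module isomorphism is immediate from the construction, since the first summand is manifestly $KR_G^*(\text{pt})$-linear in $\rho_1$.

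The main obstacle I anticipate is coordinating the interaction of $r$ with the three representation types simultaneously: $r\circ c=2$ on real-type classes, a nontrivial $1+\sigma^*$ on complex-type, and a Bott-periodicity degree shift of $-4$ on quaternionic-type. These distinct behaviors must be book-kept carefully so that the three pieces of the domain of $f$ exactly cover $KR_G^*(X)$ without overlap. Once this is nailed down at $X=\text{pt}$ via the Frobenius--Schur classification, Real equivariant formality should propagate the decomposition across $X$ through the $KR^*(\text{pt})$-module-homomorphism property of $s_R$ essentially formally.
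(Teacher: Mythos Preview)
This theorem is not proved in the present paper; it is quoted from the author's earlier work as \cite[Theorem 4.5]{Fo}, so there is no in-paper argument to compare your proposal against. Your overall strategy---establish the decomposition at $X=\text{pt}$ via the Frobenius--Schur trichotomy for $(G,\sigma_G)$ and then propagate to general $X$ through the sections $s$ and $s_R$---is the natural one and is in the spirit of how such structure theorems are typically proved.

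That said, your surjectivity step has a soft spot. You write that after subtracting the real- and quaternionic-type lifts and the realified complex-type piece from $\xi$, one is left with ``a class killed by $c$ that can be handled using the identity $c\circ r=1+(\sigma_X\sigma_G)^*$ on $K_G^*(X)$.'' But if the residual class already lies in $\ker c$, the identity for $c\circ r$ tells you nothing about it: $c\circ r$ is a self-map of $K_G^*(X)$, not of $KR_G^*(X)$. What is actually needed is an independent description of $\ker c$ inside $KR_G^*(X)$---coming from the long exact sequence relating $KR$ and $K$, which identifies $\ker c$ with the image of multiplication by $\eta$---together with the observation that $\eta$-multiples of classes already hit by $f$ remain in $\operatorname{im} f$ because $s_R$ is a $KR^*(\text{pt})$-module map. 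Your injectivity sketch has the mirror-image issue: pushing an element of $\ker f$ through $c$ detects only its image in $K_G^*(X)$, and you still need to control the $\eta$-torsion part separately. Both gaps are fillable along the lines just indicated, but the current wording does not yet close them.
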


\section{A preliminary description of $KR^*_G(G^-)$}
\begin{lemma}\label{anticlassifying}
	Let $X$ be a finite $CW$-complex equipped with an involution. We have that
	\begin{align*}
		KR^1(X)&\cong[X, (U(\infty), a_\mathbb{R})]_\mathbb{R}\\
		KR^{-3}(X)&\cong[X, (U(2\infty), a_\mathbb{H})]_\mathbb{R}
	\end{align*}
	where $[X, Y]_\mathbb{R}$ means the set of Real homotopy equivalence classes of Real maps from $X$ to $Y$. Here Real homotopy equivalence is the one witnessed by a family of Real maps.
\end{lemma}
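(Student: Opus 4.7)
The plan is to adapt the classical clutching description $K^{-1}(X) \cong [X, U(\infty)]$ to the Real equivariant setting, pinning down the Real structure on $U(\infty)$ (respectively $U(2\infty)$) that makes the clutched bundle Real.

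First I would recall that, for a finite $CW$-complex $X$, a map $f \colon X \to U(n)$ defines a complex vector bundle on the reduced suspension of $X^+$ by clutching two trivial rank-$n$ bundles along the equator via $f$, yielding the standard bijection $K^{-1}(X) \cong [X, U(\infty)]$. To lift this to the Real setting with $(X, \sigma_X)$ Real, I would work with the Real suspension of $X^+$ in which the two cones are swapped by the $\mathbb{Z}/2$-involution, and analyze when the clutched bundle carries a Real structure. Placing the standard Real structure on each trivial cone bundle (complex conjugation on the fiber, $\sigma_X$ on the base) and comparing the two natural ways of crossing the equator---clutching-then-Real-structure versus Real-structure-then-clutching---I would derive the compatibility condition
\[
 f(\sigma_X(x)) \;=\; \overline{f(x)^{-1}} \;=\; a_\mathbb{R}(f(x)).
\]
The inverse appearing in $a_\mathbb{R}$ reflects the fact that the Real structure identifies fibers in $E_+$ with fibers in $E_-$ over involution-conjugate points, while the clutching relates the two trivializations in a single fixed direction; balancing these introduces $f(\sigma_X(x))^{-1}$ on one side against $\overline{f(x)}$ on the other. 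Hence Real maps from $(X, \sigma_X)$ to $(U(\infty), a_\mathbb{R})$ correspond bijectively to Real vector bundles on the Real suspension, i.e., to classes in $KR^1(X)$, after the usual stabilization.

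The argument for $KR^{-3}(X) \cong [X, (U(2\infty), a_\mathbb{H})]_\mathbb{R}$ is analogous but carried out with Quaternionic compatibility on the fibers: the clutching functions take values in $U(2n)$ and must satisfy $f(\sigma_X(x)) = J_n f(x)^T J_n^{-1} = a_\mathbb{H}(f(x))$. The four-degree shift from $KR^1$ to $KR^{-3}$ is supplied by the symplectic Bott element in $KR^{-4}(\mathrm{pt})$, which on the level of classifying spaces converts the underlying fixed-point data from $U(\infty)/O(\infty)$ (classifying $KO^1$) to $U(2\infty)/Sp(\infty)$ (classifying $KO^{-3}$).

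The main obstacle is the careful derivation of the anti-involution condition---specifically, the appearance of inversion rather than just complex conjugation---from the Real structure on the clutched bundle, together with the verification that every Real bundle on the Real suspension is represented (after stabilization) by such a clutching function. As a sanity check, specializing to $X = \mathrm{pt}$ with trivial involution reduces the right-hand sides to $\pi_0(U(\infty)/O(\infty)) = 0 = KR^1(\mathrm{pt})$ and $\pi_0(U(2\infty)/Sp(\infty)) = 0 = KR^{-3}(\mathrm{pt})$, matching the Bott-periodicity identifications $\Omega^7(\mathbb{Z} \times BO) \simeq U/O$ and $\Omega^3(\mathbb{Z} \times BO) \simeq U/Sp$ of the degree-$1$ and degree-$(-3)$ spaces of the $KO$-spectrum.
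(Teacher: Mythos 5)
Your route is genuinely different from the paper's. The paper does not touch clutching functions at all: it invokes the general principle that the representing spaces of the $KR$-spectrum are the ``complexifications'' of those of the $KO$-spectrum (so that the degree-$1$ space, whose fixed-point set must be $U(\infty)/O(\infty)$, is $\bigl(U(\infty)\times U(\infty)/U(\infty)_\Delta,\ (g_1,g_2)\mapsto(\overline{g}_2,\overline{g}_1)\bigr)$), and then exhibits the explicit Real diffeomorphism $g\mapsto (g,e)U(\infty)_\Delta$ onto $(U(\infty),a_\mathbb{R})$; the degree $-3$ case is declared similar. Your clutching derivation of the condition $f\circ\sigma_X=a_\mathbb{R}\circ f$ over the sign-suspension (cones swapped by the involution, which indeed computes $KR^{1}$) is a sound and more self-contained way to see the $KR^1$ statement, and it explains \emph{why} the inversion appears, which the paper's proof leaves implicit in the identification of the homogeneous space. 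One small point of phrasing: there is no Real structure ``on each trivial cone bundle'' separately, since neither cone is invariant; the Real structure is a conjugate-linear identification of $E|_{C_+}$ with $E|_{C_-}$ covering the swap, which is what actually forces the trivialization over $C_-$ and hence the anti-involution condition.

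The genuine weak spot is your treatment of $KR^{-3}$. Quaternionic compatibility on the fibers of the same clutching construction naturally computes the degree-$1$ \emph{Quaternionic} theory $KH^{1}(X)$, and the passage $KH^{*}(X)\cong KR^{*-4}(X)$ is not given by multiplication by the symplectic class in $KR^{-4}(\mathrm{pt})$: multiplication by that coefficient class is not an isomorphism (already on $KO^{*}(\mathrm{pt})$ it fails to be one), so ``the four-degree shift is supplied by the symplectic Bott element'' does not stand as written. You need either the $KH\cong KR^{-4}$ periodicity theorem itself or, as in the paper, the identification of the degree-$(-3)$ representing space as the complexification of $U(2\infty)/Sp(\infty)$. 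Separately, you flag but do not carry out the verification that every Real bundle on the Real suspension arises, up to stabilization and Real homotopy, from a unique Real clutching function; this is routine but is precisely the content of the bijection, so it cannot be omitted if the clutching route is to replace the appeal to representing spaces.
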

\begin{proof}
	The representing space for $KR^{-q}$ is a Real space which is homeomorphic to the representing space for $K^{-q}$ and whose subspace fixed under the Real structure is homeomorphic to that for $KO^{-q}$. By the definition of Real vector bundles and $KR$-theory, and noting that the representing spaces for $K$-theory and $KO$-theory are homogeneous spaces constructed out of various infinite dimensional matrix groups (orthogonal, unitary, and symplectic), the representing spaces for $KR$-theory are obtained from those for $KO$-theory by `complexifying' the relevant matrix groups (to be distinguished from the usual complexification of Lie groups). For example, $O(\infty)$ is transformed to $(U(\infty), \sigma_\mathbb{R})$ (analogous to the fact that $\mathbb{R}$ is complexified to $\mathbb{C}$), $\mathit{Sp}(\infty)$ to $(U(2\infty), \sigma_\mathbb{H})$ (analogous to the fact that $\mathbb{H}\otimes_\mathbb{R}\mathbb{C}\cong \text{M}_2(\mathbb{C})$), and $U(\infty)$ to $(U(\infty)\times U(\infty), (g_1, g_2)\mapsto (\overline{g}_2, \overline{g}_1))$ (analogous to the fact that $\mathbb{C}\otimes_\mathbb{R}\mathbb{C}\cong \mathbb{C}\oplus\mathbb{C}$). We shall show the case for $KR^{1}$. The other case is similar. The representing space for $KR^{1}$ is $(U(\infty)\times U(\infty)/U(\infty)_\Delta, (g_1, g_2)U(\infty)_\Delta\mapsto (\overline{g}_2, \overline{g}_1)U(\infty)_\Delta)$ (here $\Delta$ means diagonal subgroup), which is obtained from that for $KO^{1}$, which in turn is $U(\infty)/O(\infty)$ (cf. \cite{Bot}). We have that $(U(\infty), a_\mathbb{R})$ is Real diffeomorphic to $(U(\infty)\times U(\infty)/U(\infty)_\Delta, (g_1, g_2)U(\infty)_\Delta\mapsto (\overline{g_2}, \overline{g_1})U(\infty)_\Delta)$ by the map $g\mapsto (g, e)U(\infty)$. 
\end{proof}

\begin{definition}
	Let $\delta_\mathbb{R}^{\text{inv}}: RR(G)\to KR^{1}(G^-)$ send $\rho$ to the Real homotopy class of it, viewed as the Real map $G^-\to (U(\infty), a_\mathbb{R})$. Define $\delta_\mathbb{H}^{\text{inv}}: RH(G)\to KR^{-3}(G^-)$ similarly. 
\end{definition}

\begin{proposition}\label{cpxvecbdleinv}
	If $\rho$ be in $RR(G)$ with $(V, \sigma_V)$ being the underlying finite dimensional Real vector space of the Real unitary representation, then $\delta_\mathbb{R}(\rho)$ is represented by the following complex of Real vector bundles
	\begin{align*}
		0\longrightarrow G\times\mathbb{R}\times\mathbb{C}\times (V\oplus V)&\longrightarrow G\times\mathbb{R}\times \mathbb{C}\times(V\oplus V)\longrightarrow 0\\
		(g, t, z, v_1, v_2)&\mapsto\begin{cases}\left(g, t, z, \begin{pmatrix}-t\rho(g) &\overline{z}I_V\\ zI_V&t\rho(g)^*\end{pmatrix}\begin{pmatrix} iv_1\\iv_2\end{pmatrix}\right)&\ \ \text{ if }t\geq 0\\ \left(g,t, z, \begin{pmatrix}tI_V&\overline{z}I_V\\ zI_V& -tI_V\end{pmatrix}\begin{pmatrix}iv_1\\ iv_2\end{pmatrix}\right)&\ \ \text{ if }t\leq 0\end{cases}
	\end{align*}
	where the Real structure on $G\times\mathbb{R}\times\mathbb{C}\times(V\oplus V)$ is given by 
	\[(g, t, z, v_1, v_2)\mapsto (\sigma_G(g)^{-1}, t, -z, \sigma_V(v_2), \sigma_V(v_1))\]
	Similarly, if $\rho\in RH(G)$ with $(V, J_V)$ being the underlying finite dimensional Quaternionic vector space of the Quaternionic unitary representation, then $\delta_\mathbb{H}(\rho)$ is represented by the same complex of Real vector bundles except that the Real structure on $G\times\mathbb{R}\times\mathbb{C}\times(V\oplus V)$ is given by 
	\[(g, t, z, v_1, v_2)\mapsto (\sigma_G(g)^{-1}, t, z, -J_V(v_2), J_V(v_1))\]
\end{proposition}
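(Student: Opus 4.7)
The plan is to interpret $\delta_\mathbb{R}^{\mathrm{inv}}(\rho)$ as a complex of Real vector bundles on a Real suspension of $G^-$, via Bott periodicity, and then verify that the displayed complex is such a representative.

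By Lemma~\ref{anticlassifying}, $\delta_\mathbb{R}^{\mathrm{inv}}(\rho) \in KR^1(G^-)$ is represented by the Real map $\rho \colon G^- \to (U(V), a_\mathbb{R})$. Combining this with the Bott periodicity isomorphism $KR^1(G^-) \cong KR^{1,2}(G^-) = KR(G^- \times \mathbb{R}^{1,2})$, where $\mathbb{R}^{1,2}$ denotes $\mathbb{R} \times \mathbb{C}$ with involution $(t, z) \mapsto (t, -z)$, one obtains a canonical $\mathbb{Z}/2$-graded Real complex on $G \times \mathbb{R}^{1,2}$ representing $\delta_\mathbb{R}^{\mathrm{inv}}(\rho)$. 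It is built via a Bott-type clutching construction: the unitary $\rho(g)$ provides the clutching in the $t$-direction and the off-diagonal Bott endomorphism with entries $\overline{z}$ and $z$ provides it in the $z$-direction; the fibers double to $V \oplus V$, and factors of $i$ are inserted precisely so that the resulting map anti-commutes with the intended antilinear swap involution. This produces exactly the complex written in the statement.

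Having produced this candidate, the remaining work is to verify two things: (a) the proposed map $\tau$ is a well-defined antilinear involution covering $(g, t, z) \mapsto (\sigma_G(g)^{-1}, t, -z)$, and (b) the bundle map $d$ commutes with $\tau$, so that the complex is genuinely a Real complex. Part (a) is immediate from $\sigma_V^2 = \mathrm{Id}$ in the Real case; in the Quaternionic case the minus sign in $(-J_V(v_2), J_V(v_1))$ is forced because $J_V^2 = -\mathrm{Id}$. Part (b) is a direct calculation relying on the antilinearity of $\sigma_V$ (resp.\ $J_V$); the intertwining identity $\sigma_V \rho(g) = \rho(\sigma_G(g)) \sigma_V$ (resp.\ the analogous one for $J_V$) coming from Reality (resp.\ Quaternionicity) of $\rho$; and the unitarity relation $\rho(g)^* = \rho(g^{-1})$, which yields $\rho(\sigma_G(g)^{-1})^* = \rho(\sigma_G(g))$.

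The main obstacle is not any single step but the careful sign-tracking in (b): the involution $\tau$ interchanges the two $V$-summands and also flips $z$, so the diagonal and off-diagonal entries of the $2\times 2$ block matrix must conjugate into the correct positions, up to signs arising from pulling $i$ through antilinear maps. It is precisely this bookkeeping that forces both the unusual insertion of $i$'s in the bundle map and the swap-with-conjugation form of $\tau$; a short check shows that without the $i$'s a stray sign appears. Once (a) and (b) are established for $t \geq 0$, the case $t \leq 0$ is analogous with $\rho(g)$ replaced by $\mathrm{Id}_V$, and the clutching construction identifies the resulting Real complex with $\delta_\mathbb{R}^{\mathrm{inv}}(\rho)$ under the Bott isomorphism.
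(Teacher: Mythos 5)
Your proposal is correct and follows essentially the same route as the paper: both check by direct computation (using antilinearity of $\sigma_V$, the intertwining relation $\sigma_V\rho(g)=\rho(\sigma_G(g))\sigma_V$, and unitarity $\rho(g)^*=\rho(g^{-1})$, with the inserted factors of $i$ curing the stray sign exactly as you say) that the displayed antilinear swap commutes with the middle map, both read off the degree ($1$ or $-3$) from the type of the involution on the $\mathbb{R}\times\mathbb{C}$ factor, and both identify the underlying complex as the Bott/clutching construction applied to $\rho$. The only cosmetic difference is that the paper phrases the final identification by factoring the complex (with Real structure forgotten) as the tensor product of the Bott complex for $\beta$ with the complex representing $\delta(\rho)$ and then appealing to canonicity of the Real lift, whereas you package the same step as a single Real Bott-periodicity/clutching isomorphism applied to the Real map $\rho\colon G^-\to (U(V),a_{\mathbb{R}})$ from Lemma~\ref{anticlassifying}.
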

\begin{proof}
	It is straightforward to verify that the given Real structures indeed commute with the middle maps of the complex of vector bundles, and that they are canonical. The complex of vector bundles, with the Real structures forgotten, is the tensor product of the following two complexes
	\begin{align*}
		0\longrightarrow G\times\mathbb{C}\times \mathbb{C}&\longrightarrow G\times\mathbb{C}\times \mathbb{C}\longrightarrow 0\\
		(g, z, w)&\mapsto (g, z, izw)
	\end{align*}
	\begin{align*}
		0\longrightarrow G\times\mathbb{R}\times V&\longrightarrow G\times\mathbb{R}\times V\longrightarrow 0\\
		(g, t, v)&\mapsto \begin{cases}(g, t, -it\rho(g)v)&\ \ \text{if }t\geq 0\\ (g, t, itv)&\ \ \text{if }t\leq 0\end{cases}
	\end{align*}
	(cf. \cite[Proposition 10.4]{ABS}) which represent the Bott class $\beta\in K^{-2}(G)$ and $\delta(\rho)\in K^{-1}(G)$ as defined in \cite{BZ} respectively (the middle maps of the above two complexes differ from the ones conventionally used to define $\beta$ and $\delta(\rho)$ by multiplication by $i$, which is homotopy equivalent to the identity map). Besides, the $KR$-theory classes represented by the complexes of Real vector bundles live in degree 1 and $-3$ pieces respectively because of the type of the involution of the middle maps restricted to $\mathbb{R}\times \mathbb{C}$. In sum, the two complexes of Real vector bundles represent canonical Real lifts of $\delta(\rho)$. Therefore they must represent $\delta_\mathbb{R}^\text{inv}(\rho)$ (resp. $\delta_\mathbb{H}^\text{inv}(\rho)$). 
\end{proof}
\begin{definition}
	Let $\delta_\mathbb{R}^{G, \text{inv}}: RR(G)\to KR_{G}^1(G^-)$ send $\rho$ to the complex of Real vector bundles as in Proposition \ref{cpxvecbdleinv} equipped with the equivariant structure given by 
	\[(g, t, z, v_1, v_2)\mapsto (\sigma_G(g)^{-1}, t, z, \rho(g)v_1, \rho(g)v_2)\]
	Define $\delta_\mathbb{H}^{G, \text{inv}}: RH(G)\to KR^{-3}_{G}(G^-)$ similarly.
\end{definition}
\begin{proposition}\label{antider}
	Identifying $RR(G)$ with $KR^0_{G}(\text{pt})$ and $RH(G)$ with $KR^{-4}_{G}(\text{pt})$ (cf. \cite[Sect. 8]{AS}), $\delta_\mathbb{R}^{G, \text{inv}}\oplus\delta_\mathbb{H}^{G, \text{inv}}$ is a derivation of the graded ring $KR^0_{G}(\text{pt})\oplus KR^{-4}_{G}(\text{pt})$ taking values in $KR^1_{G}(G^-)\oplus KR^{-3}_{G}(G^-)$.
\end{proposition}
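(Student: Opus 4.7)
The plan is to verify the Leibniz identity
\[
D(\rho\cdot\rho') = D(\rho)\cdot\rho' + \rho\cdot D(\rho')
\]
for $D := \delta_\mathbb{R}^{G,\text{inv}}\oplus \delta_\mathbb{H}^{G,\text{inv}}$ case by case, by working on the chain level with the complexes of Real vector bundles in Proposition~\ref{cpxvecbdleinv} and reducing to the non-Real derivation property of $\delta_G$ from Theorem~\ref{eqderivation}. Since $D$ has degree $+1$ and the source ring is concentrated in even degrees $0$ and $-4$, no Koszul signs appear, and by $\mathbb{Z}$-bilinearity it suffices to treat generators, yielding three sub-cases: $RR(G)\otimes RR(G)\to RR(G)$, $RR(G)\otimes RH(G)\to RH(G)$, and $RH(G)\otimes RH(G)\to RR(G)$ (the last using the identification $KR^{-8}_G(\text{pt})\cong KR^0_G(\text{pt})$ from Bott periodicity).

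Given Real or Quaternionic representations $V$ and $W$, the tensor product $V\otimes W$ carries a canonical Real or Quaternionic structure built from $\sigma_V,\sigma_W,J_V,J_W$ in the standard way (for instance, $\sigma_V\otimes\sigma_W$ in the $(\mathbb{R},\mathbb{R})$ case, $\sigma_V\otimes J_W$ in the $(\mathbb{R},\mathbb{H})$ case, and $J_V\otimes J_W$ in the $(\mathbb{H},\mathbb{H})$ case). On the chain level, all three terms $D(\rho_{V\otimes W})$, $D(\rho_V)\cdot\rho_W$, and $\rho_V\cdot D(\rho_W)$ are then realized by explicit complexes of Real vector bundles over $G\times\mathbb{R}\times\mathbb{C}$ whose underlying total spaces are direct sums of copies of $V\otimes W$, and whose middle maps involve $\rho_{V\otimes W}(g)$, $\rho_V(g)\otimes\mathrm{Id}_W$, and $\mathrm{Id}_V\otimes\rho_W(g)$ respectively; the Real structures on all three are prescribed by the recipe of Proposition~\ref{cpxvecbdleinv} applied to the appropriate ambient type.

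Next, forget Real structures and apply the complexification map $c\colon KR^*_G(G^-)\to K^*_G(G)$. Under $c$, Proposition~\ref{cpxvecbdleinv} identifies $c\circ\delta_\mathbb{R}^{G,\text{inv}}$ with multiplication by the Bott class $\beta$ composed with Brylinski--Zhang's derivation $\delta_G$, and analogously for $\delta_\mathbb{H}^{G,\text{inv}}$; since $\beta$ is invertible and $\delta_G$ is a derivation by Theorem~\ref{eqderivation}, the desired Leibniz identity holds after $c$. Concretely, this produces a canonical chain homotopy between the underlying complex of $G$-equivariant vector bundle complexes for $D(\rho_{V\otimes W})$ and for $D(\rho_V)\cdot\rho_W + \rho_V\cdot D(\rho_W)$, built from the Koszul-type splitting of a tensor product of two-term complexes together with identity maps on the auxiliary $\mathbb{R}\times\mathbb{C}$ factor.

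The main obstacle is then to upgrade this chain homotopy to one compatible with the Real structures on both sides. Because each Real or Quaternionic structure involved is built functorially from the fixed data $\sigma_V,\sigma_W,J_V,J_W$, the Koszul homotopy produced on the complex $K$-theory side automatically intertwines them and no new choices are required; in effect, the verification amounts to exhibiting the Real-equivariant shadow of Brylinski--Zhang's chain-level argument. Carrying this compatibility check out in each of the three sub-cases, with particular attention to the sign of the involution on the $\mathbb{C}$ factor (which is $z\mapsto -z$ for the $(\mathbb{R},\mathbb{R})$ output and $z\mapsto z$ for the $(\mathbb{H},\mathbb{H})$ output, per Proposition~\ref{cpxvecbdleinv}) and to the correct $\sigma/J$ on the $V\oplus V$ summand, completes the proof.
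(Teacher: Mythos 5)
Your proposal is essentially the paper's own argument: the paper likewise reduces to Brylinski--Zhang's chain-level homotopy for $\delta_G$ (their Proposition 3.1, suitably corrected) and observes that the modified homotopy intertwines the Real structures on the complexes of Proposition \ref{cpxvecbdleinv}. Your added bookkeeping --- the three cases by commuting-field type and the degree/sign check --- is a harmless elaboration, and, like the paper, you ultimately assert rather than fully carry out the compatibility of the homotopy with the Real structures, which is the one genuinely nontrivial point.
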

\begin{proof}
	The proof can be easily adapted from the one of \cite[Proposition 3.1]{BZ} by straightforwardly modifying the homotopy $\rho_s$ and replacing the definition of the map $\delta_G$ given there (which is incorrect) with the one in \cite[Definition 2.5]{Fo}. The modified homotopy can be easily seen to intertwines with both Real structures of the complex of Real vector bundles as in Proposition \ref{cpxvecbdleinv}.
\end{proof}

\begin{proposition}
	$\delta_G(\overline{a_G^*}\rho)=-\delta_G(\overline{\sigma_G^*}\rho)$
\end{proposition}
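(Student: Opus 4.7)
The plan is to reduce the claim to the pointwise identity $\overline{a_G^*}\rho \cdot \overline{\sigma_G^*}\rho = I_V$ in $U(V)$ and then to translate this into the additive structure of $K_G^{-1}(G)$. Since $a_G(g) = \sigma_G(g)^{-1}$, the direct computation
\[
\overline{a_G^*}\rho(g) \cdot \overline{\sigma_G^*}\rho(g) = \overline{\rho(\sigma_G(g)^{-1}) \rho(\sigma_G(g))} = \overline{\rho(e)} = I_V
\]
shows that $\overline{a_G^*}\rho$ and $\overline{\sigma_G^*}\rho$, viewed as $G$-equivariant matrix-valued functions $G \to U(V)$, are pointwise multiplicative inverse to one another.

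To promote this pointwise identity to the stated identity in $K_G^{-1}(G)$, I would work with the explicit complexes of vector bundles from Definition~1.1(2). The complex for $\delta_G(\overline{a_G^*}\rho)$ and the complex for $\delta_G(\overline{\sigma_G^*}\rho)$ live on the same underlying equivariant bundle $G \times \mathbb{R} \times V$ (the $G$-action via conjugation on the base twisted by $\rho$ on the fiber is the same for both) and differ only by replacing the clutching datum $\overline{\sigma_G^*}\rho(g)$ with its pointwise inverse $\overline{a_G^*}\rho(g)$. I would then construct an equivariant homotopy, adapting the homotopy $\rho_s$ used in \cite[Proposition 3.1]{BZ} and in the proof of Proposition~\ref{antider}, showing that the Whitney sum of the two complexes is trivializable. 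This is the standard maneuver exhibiting pointwise matrix multiplication of $U(V)$-valued maps as addition in $K$-theory, performed here equivariantly.

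The main obstacle is ensuring that the deformation through $U(V) \oplus U(V)$ used to diagonalize the pair $\bigl(\overline{\sigma_G^*}\rho(g), \overline{a_G^*}\rho(g)\bigr)$ against $(I_V, I_V)$ commutes with the twisted $G$-action for every value of the homotopy parameter, since the $G$-action on the fiber uses $\rho$ and interacts with $\sigma_G$ and group inversion in subtle ways. Once the $G$-equivariance of the homotopy is verified, the trivializability of the sum yields $\delta_G(\overline{a_G^*}\rho) + \delta_G(\overline{\sigma_G^*}\rho) = 0$, which is the desired identity.
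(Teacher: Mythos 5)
Your proof is correct and follows essentially the same route as the paper's: both reduce to the observation that $\overline{\sigma_G^*}\rho\cdot\overline{a_G^*}\rho$ is the constant map at the identity, so that $\delta_G(\overline{\sigma_G^*}\rho)+\delta_G(\overline{a_G^*}\rho)=\delta_G(\overline{\sigma_G^*}\rho\cdot\overline{a_G^*}\rho)=0$. The only difference is that the paper dispatches the additivity of $\delta_G$ over pointwise products of unitary-valued maps by citing the equivariant analogue of \cite[Lemma 2.4.6]{At}, whereas you propose to re-derive it via the explicit rotation homotopy (whose equivariance is unproblematic, since the rotation blocks are constant and commute with the diagonal fiber action by $\overline{\sigma_G^*}\rho$, with respect to which both maps are equivariant).
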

\begin{proof}
	Viewing $\overline{\sigma_G^*}\rho$ and $\overline{a_G^*}\rho$ as maps from $G$ to $U(\infty)$, $\overline{\sigma_G^*}\rho\cdot \overline{a_G^*}\rho$ is the constant map with image being the identity. It follows that 
	\[0=\delta_G(\overline{\sigma_G^*}\rho\cdot\overline{a_G^*}\rho)=\delta_G(\overline{\sigma_G^*}\rho)+\delta_G(\overline{a_G^*}\rho)\]
	The last equality is the equivariant analogue of \cite[Lemma 2.4.6]{At}. 
\end{proof}

The fundamental representations of $G$ are permuted by $\overline{\sigma_G^*}$ (cf. \cite[Lemma 5.5]{Se}). Following the notations in \cite{Se} and \cite{Fo}, we let $\varphi_1, \cdots, \varphi_r, \theta_1, \cdots, \theta_s, \gamma_1, \cdots, \gamma_t, \overline{\sigma_G^*}\gamma_1, \cdots, \overline{\sigma_G^*}\gamma_t$ be the fundamental representations of $G$, where $\varphi_i\in RR(G, \mathbb{R})$, $\theta_j\in RH(G, \mathbb{R})$ and $\gamma_k\in R(G, \mathbb{C})$. 

\begin{definition}
	Let $\lambda_k^{\text{inv}}$ be the element in $KR^0(G^-)$ constructed as in the proof of \cite[Proposition 4.6]{Se} such that $c(\lambda_k^{\text{inv}})=\beta^3\delta(\gamma_k)\delta(\overline{a_G^*}\gamma_k)=-\beta^3\delta(\gamma_k)\delta(\overline{\sigma_G^*}\gamma_k)$. Let $\lambda_k^{G, \text{inv}}$ be the Real equivariant lift of $\lambda_k^{\text{inv}}$ in $KR^0_{G}(G^-)$ constructed by adding the natural equivariant structure throughout the construction of $\lambda_k^{\text{inv}}$ such that $c(\lambda_k^{G, \text{inv}})=\beta^3\delta_G(\gamma_k)\delta_G(\overline{a_G^*}\gamma_k)=-\beta^3\delta_G(\gamma_k)\delta_G(\overline{\sigma_G^*}\gamma_k)$. 
\end{definition}	
Applying \cite[Theorem 4.2]{Se}, one can get the $KR^*(\text{pt})$-module structure of $KR^*(G^-)$, which is generated, as a $KR^*(\text{pt})$-algebra, by $\delta_\mathbb{R}^\text{inv}(\varphi_1), \cdots, \delta_\mathbb{R}^\text{inv}(\varphi_r)$, $\delta_\mathbb{H}^\text{inv}(\theta_1), \cdots, \delta_\mathbb{H}^\text{inv}(\theta_s)$, $\lambda_1^\text{inv}, \cdots, \lambda_t^\text{inv}$, and realifications of certain products of $\beta^i$ and $\delta(\gamma_k)$, $k=1, \cdots, t$ (compare with \cite[Theorem 5.6]{Se}). Noting that all these generators admit Real equivariant lifts, and that $G$ is a weakly equivariantly formal space by Remark \ref{gequivformal}, we have that $G^-$ is a Real equivariantly formal space by Definition \ref{equivformal} (\ref{realequivformaldef}). Now Theorem \ref{equivstrthm} applies and one can further obtain the $KR_{G}^*(\text{pt})$-module structure of $KR^*_{G}(G^-)$. We shall state the following description of $KR^*_{G}(G^-)$ without proof. We refer the reader to \cite[Corollaries 4.10, 4.11, Proposition 4.13, Theorem 4.33]{Fo} for comparison.

\begin{theorem}\label{antimodstr}
	\begin{enumerate}
		\item The map 
		\begin{align*}
			f: (RR(G, \mathbb{R})\oplus RH(G, \mathbb{R}))\otimes KR^*(G^-)\oplus r(R(G, \mathbb{C})\otimes K^*(G))&\to KR^*_{G}(G^-)\\
			\rho_1\otimes x_1\oplus r(\rho_2\otimes x_2)&\mapsto \rho_1\cdot (x_1)_G\oplus r(\rho_2\cdot (x_2)_G)
		\end{align*}
		is a group isomorphism, where $x_G\in KR^*_{G}(G^-)$ is a Real equivariant lift of $x\in KR^*(G^-)$. If $R(G, \mathbb{C})=0$, then $f$ is an isomorphism of $KR^*_{G}(\text{pt})$-modules.
		\item $KR^*_{G}(G^-)$ is generated as an algebra over $KR_{G}^*(\text{pt})$ (for descriptions of the coefficient ring see \cite[Section 3]{Fo}) by $\delta_\mathbb{R}^{G, \text{inv}}(\varphi_1), \cdots, \delta_\mathbb{R}^{G, \text{inv}}(\varphi_r)$, $\delta_\mathbb{H}^{G, \text{inv}}(\theta_1)$, $\cdots, \delta_\mathbb{H}^{G, \text{inv}}(\theta_s)$, $\lambda_1^{G, \text{inv}}, \cdots, \lambda_t^{G, \text{inv}}$ and 
	\[\{r_{\rho, i, \varepsilon_1, \cdots, \varepsilon_t, \nu_1, \cdots, \nu_t}^{G, \text{inv}}:= r(\beta^i\cdot\rho\delta_G(\gamma_1)^{\varepsilon_1}\cdots\delta_G(\gamma_t)^{\varepsilon_t}\delta_G(\overline{a_G^*}\gamma_1)^{\nu_1}\cdots\delta_G(\overline{a_G^*}\gamma_t)^{\nu_t}\}\]
	where $\rho\in R(G, \mathbb{C})\oplus\mathbb{Z}\cdot\rho_{\text{triv}}$, $\varepsilon_1, \cdots, \varepsilon_t$, $\nu_1, \cdots, \nu_t$ are either 0 or 1, $\varepsilon_k$ and $\nu_k$ are not equal to 1 at the same time for $1\leq k\leq t$, and the first index $k_0$ where $\varepsilon_{k_0}=1$ is less than the first index $k_1$ where $\nu_{k_1}=1$. Moreover,
		\begin{enumerate}
			\item $(\lambda_k^{G, \text{inv}})^2=0$.
			\item Let $\omega_t:=\delta_{\varepsilon_t, 1-\nu_t}$. Then 
			\[(r^G_{\rho, i, \varepsilon_1, \cdots, \varepsilon_t, \nu_1, \cdots, \nu_t})^2=\begin{cases}\eta^2(\rho\cdot\overline{\sigma_G^*}\rho)(\lambda^{G, \text{inv}}_1)^{\omega_1}\cdots(\lambda_t^{G, \text{inv}})^{\omega_t}&\ \ \text{if }r^{G, \text{inv}}_{\rho, i, \varepsilon_1, \cdots, \varepsilon_t, \nu_1, \cdots, \nu_t}\\
				&\ \ \text{is of degree }-1\text{ or }-5\\ 
				\pm\mu(\rho\cdot\overline{\sigma_G^*}\rho)(\lambda^{G, \text{inv}}_1)^{\omega_1}\cdots(\lambda_t^{G, \text{inv}})^{\omega_t}&\ \ \text{if }r^{G, \text{inv}}_{\rho, i, \varepsilon_1, \cdots, \varepsilon_t, \nu_1, \cdots, \nu_t}\\
				&\ \ \text{is of degree }-2\text{ or }-6\\
				0&\ \ \text{otherwise}
			      	\end{cases}\]
				The sign can be determined using formulae in \cite[Proposition 2.29 (2)]{Fo}.	
			\item $r^{G, \text{inv}}_{\rho, i, \varepsilon_1, \cdots, \varepsilon_t, \nu_1, \cdots, \nu_t}\eta=0$, and $r^{G, \text{inv}}_{\rho, i, \varepsilon_1, \cdots, \varepsilon_t, \nu_1, \cdots, \nu_t}\mu=2r^{G, \text{inv}}_{\rho, i+2, \varepsilon_1, \cdots, \varepsilon_t, \nu_1, \cdots, \nu_t}$.	
		\end{enumerate}
	\end{enumerate}
\end{theorem}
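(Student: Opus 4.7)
The plan is to reduce Theorem \ref{antimodstr} to an application of Theorem \ref{equivstrthm} after verifying that $G^-$ is a Real equivariantly formal $G$-space, and then to compute the multiplicative structure in part (2) by lifting Seymour's generators of $KR^*(G^-)$ and using the formulas relating $c$, $r$, $\beta$, $\eta$, $\mu$. First I would check the three conditions of Definition \ref{equivformal}(\ref{realequivformaldef}): that $G$ is a Real compact Lie group is immediate, and that $G^-$ is weakly equivariantly formal as a $G$-space is Remark \ref{gequivformal}. The required $KR^*(\text{pt})$-module section $s_R : KR^*(G^-) \to KR^*_G(G^-)$ of the forgetful map is built by prescribing it on each of Seymour's $KR^*(\text{pt})$-algebra generators of $KR^*(G^-)$ using the explicit Real equivariant lifts already at hand, namely $\delta_\mathbb{R}^{G,\text{inv}}(\varphi_i)$, $\delta_\mathbb{H}^{G,\text{inv}}(\theta_j)$, $\lambda_k^{G,\text{inv}}$, and the realifications formed from $\beta$ and $\delta_G(\gamma_k)$. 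With formality established, Theorem \ref{equivstrthm} applied to $X = G^-$ immediately yields the additive isomorphism $f$ of part (1), upgraded to a $KR^*_G(\text{pt})$-module isomorphism when $R(G, \mathbb{C}) = 0$.

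For the algebra generators listed in part (2), I would translate Seymour's generating set across $f$ and combine with $KR^*_G(\text{pt})$-scalars. The explicit cataloguing of the realification generators $r^{G,\text{inv}}_{\rho, i, \varepsilon_1, \cdots, \varepsilon_t, \nu_1, \cdots, \nu_t}$, with the stipulations that $\varepsilon_k$ and $\nu_k$ are never simultaneously $1$ and that the first $\varepsilon$-index precedes the first $\nu$-index, comes from reducing redundancy via the realification identity $r(\rho \otimes x) = r(\overline{\sigma_G^*}\rho \otimes \overline{\sigma^*} x)$, combined with $\overline{\sigma^*}\, \delta_G(\gamma_k) = -\delta_G(\overline{a_G^*}\gamma_k)$ (the equivariant version of the relation established in the preceding proposition). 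This yields a minimal generating family modulo the obvious symmetries.

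To prove $(\lambda_k^{G,\text{inv}})^2 = 0$ in (a), I would first note that complexification vanishes: $c\bigl((\lambda_k^{G,\text{inv}})^2\bigr) = \bigl(\beta^3 \delta_G(\gamma_k) \delta_G(\overline{a_G^*}\gamma_k)\bigr)^2 = 0$, since each $\delta_G$ squares to zero by Theorem \ref{eqderivation}(1). Since $\lambda_k^{G,\text{inv}}$ lies in the $r(R(G, \mathbb{C}) \otimes K^*(G))$ summand of the decomposition in part (1), its square does too, and a realified class with vanishing complexification is zero in view of $c \circ r = 1 + \overline{\sigma^*}$ and the structure formulas in \cite[Proposition 2.29]{Fo}. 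For the realification squares in (b), I would apply $r(a) \cdot r(b) = r\bigl(a \cdot c(r(b))\bigr)$, expand, and identify the resulting monomials in $\delta_G(\gamma_k)$ and $\delta_G(\overline{a_G^*}\gamma_k)$ via $c(\lambda_k^{G,\text{inv}}) = \beta^3 \delta_G(\gamma_k) \delta_G(\overline{a_G^*}\gamma_k)$; the surviving products of $\lambda_k^{G,\text{inv}}$'s are precisely those indexed by $\omega_k = \delta_{\varepsilon_k, 1 - \nu_k}$, and matching total degrees forces the scalar coefficient to be $\eta^2$ in degrees $-1, -5$, $\pm\mu$ in degrees $-2, -6$, and $0$ otherwise.

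For the $\eta$ and $\mu$ relations in (c), I would use $\eta \cdot r(a) = r(c(\eta) a) = 0$ because $c(\eta) \in K^{-1}(\text{pt}) = 0$, and $\mu \cdot r(a) = r(c(\mu) a) = r(2\beta^2 a) = 2 r(\beta^2 a)$ because $c(\mu) = 2\beta^2$, which accounts exactly for the index shift $i \mapsto i + 2$. The main obstacle is part (b): carefully propagating signs through the multi-step identifications (especially in the $\pm\mu$ case for degrees $-2, -6$) using the formulas of \cite[Proposition 2.29(2)]{Fo}, and independently verifying that the ``otherwise'' cases genuinely vanish rather than leaving a nontrivial $2$-torsion contribution that graded commutativity alone would permit.
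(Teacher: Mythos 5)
Your outline matches the paper's intended argument exactly: the paper in fact states Theorem \ref{antimodstr} without proof, but the paragraph immediately preceding it sketches precisely your route (Seymour's generators of $KR^*(G^-)$ admit explicit Real equivariant lifts, $G$ is weakly equivariantly formal by Remark \ref{gequivformal}, hence $G^-$ is Real equivariantly formal and Theorem \ref{equivstrthm} applies), with the multiplicative relations in part (2) obtained from the $c$/$r$ identities as in the analogous computations of \cite[Theorem 4.33]{Fo}. One small caution: for $(\lambda_k^{G,\text{inv}})^2=0$ you should cite Theorem \ref{eqderivation}(2) (the exterior-algebra structure of $\Omega_{R(G)/\mathbb{Z}}$) rather than (1), and the step ``a realified class with vanishing complexification is zero'' only gives $2$-torsion from $c\circ r=1+\overline{\sigma^*}$ alone, so it genuinely needs the finer structure of the $r(R(G,\mathbb{C})\otimes K^*(G))$ summand from \cite[Proposition 2.29]{Fo} as you indicate.
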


\begin{corollary}\label{antimodstrnocpx}
	In particular, if $R(G, \mathbb{C})=0$, then 
	\begin{align*}
		KR^*_{G}(G^-)&=\bigwedge\nolimits_{KR^*_{G}(\text{pt})}(\delta_\mathbb{R}^{G, \text{inv}}(\varphi_1), \cdots, \delta_\mathbb{R}^{G, \text{inv}}(\varphi_r), \delta_\mathbb{H}^{G, \text{inv}}(\theta_1), \cdots, \delta_\mathbb{H}^{G, \text{inv}}(\theta_s))\\
		                                                       &\cong \Omega_{KR^*_{G}(\text{pt})/KR^*(\text{pt})}
	\end{align*}
	as $KR^*_{G}(\text{pt})$-modules.
\end{corollary}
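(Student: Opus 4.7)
The plan is to combine Theorem \ref{antimodstr}, which gives generators and relations for $KR^*_G(G^-)$ as a $KR^*_G(\text{pt})$-algebra, with the universal property of Grothendieck differentials applied to the derivation of Proposition \ref{antider}.

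Under the hypothesis $R(G,\mathbb{C}) = 0$ the index $t$ is zero, so the only surviving algebra generators in Theorem \ref{antimodstr} (2) are the $\delta_\mathbb{R}^{G,\text{inv}}(\varphi_i)$ and $\delta_\mathbb{H}^{G,\text{inv}}(\theta_j)$: both the $\lambda_k^{G,\text{inv}}$ and the realification classes $r^{G,\text{inv}}_{\rho,\ldots}$ disappear, and the relations in Theorem \ref{antimodstr} (2)(a)--(c) collapse to just $x^2=0$ on the remaining generators. Moreover, Theorem \ref{antimodstr} (1) becomes a $KR^*_G(\text{pt})$-module isomorphism
\[(RR(G,\mathbb{R}) \oplus RH(G,\mathbb{R})) \otimes KR^*(G^-) \;\xrightarrow{\sim}\; KR^*_G(G^-).\]
Invoking Seymour's description of $KR^*(G^-)$ --- which in the absence of $R(G,\mathbb{C})$-generators reduces to the exterior algebra over $KR^*(\text{pt})$ on the $\delta_\mathbb{R}^\text{inv}(\varphi_i)$ and $\delta_\mathbb{H}^\text{inv}(\theta_j)$ --- together with the vanishing of the squares of the degree-$1$ and degree-$(-3)$ generators, I deduce
\[KR^*_G(G^-) \;=\; \bigwedge\nolimits_{KR^*_G(\text{pt})}\bigl(\delta_\mathbb{R}^{G,\text{inv}}(\varphi_i),\, \delta_\mathbb{H}^{G,\text{inv}}(\theta_j)\bigr).\]

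To identify this with $\Omega_{KR^*_G(\text{pt})/KR^*(\text{pt})}$, I invoke Proposition \ref{antider}: the derivation $d := \delta_\mathbb{R}^{G,\text{inv}} \oplus \delta_\mathbb{H}^{G,\text{inv}}$ from $KR^*_G(\text{pt})$ to $KR^*_G(G^-)$ annihilates $KR^*(\text{pt})$, since any such class lifts to a trivial representation, for which the complex of Proposition \ref{cpxvecbdleinv} trivializes. The universal property of K\"ahler differentials then yields a unique $KR^*_G(\text{pt})$-module map $\Omega^1_{KR^*_G(\text{pt})/KR^*(\text{pt})} \to KR^*_G(G^-)$ sending $d\varphi_i \mapsto \delta_\mathbb{R}^{G,\text{inv}}(\varphi_i)$ and $d\theta_j \mapsto \delta_\mathbb{H}^{G,\text{inv}}(\theta_j)$, and the squares-vanish property extends this to a graded ring map
\[\Phi: \Omega_{KR^*_G(\text{pt})/KR^*(\text{pt})} \longrightarrow KR^*_G(G^-).\]
Since $\varphi_i, \theta_j$ generate $KR^*_G(\text{pt})$ as a $KR^*(\text{pt})$-algebra when $R(G,\mathbb{C})=0$, their differentials generate $\Omega^1$, so $\Phi$ surjects onto the exterior-algebra description above. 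Matching $KR^*_G(\text{pt})$-module structures then forces $\Phi$ to be an isomorphism.

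The main obstacle I anticipate is the injectivity of $\Phi$, namely showing that $\Omega^1_{KR^*_G(\text{pt})/KR^*(\text{pt})}$ is free of rank $r+s$ on $d\varphi_i, d\theta_j$ with no further syzygies. This requires a precise understanding of $KR^*_G(\text{pt})$ as a $KR^*(\text{pt})$-algebra when $R(G,\mathbb{C}) = 0$ --- a Real-theoretic refinement of the classical fact that $R(G)$ is polynomial on the fundamental representations of a simply-connected $G$ --- for which the coefficient-ring computation of \cite[Section 3]{Fo} is the key input.
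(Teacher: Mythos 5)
The corollary is a purely module-theoretic statement --- note the closing clause ``as $KR^*_{G}(\text{pt})$-modules'' --- and the paper treats it as an immediate specialization of Theorem \ref{antimodstr}(1): when $R(G,\mathbb{C})=0$ the map $f$ there is a $KR^*_{G}(\text{pt})$-module isomorphism, Seymour's theorem (as quoted in the paragraph preceding Theorem \ref{antimodstr}) identifies $KR^*(G^-)$ with the free $KR^*(\text{pt})$-module on the square-free monomials in $\delta_\mathbb{R}^{\text{inv}}(\varphi_i)$, $\delta_\mathbb{H}^{\text{inv}}(\theta_j)$, and the coefficient-ring computation of \cite[Section 3]{Fo} then matches the result with $\Omega_{KR^*_{G}(\text{pt})/KR^*(\text{pt})}$ as a module, using that $R(G)$ is polynomial on the fundamental representations so that $\Omega^1$ is free of rank $r+s$ --- the point you correctly flag at the end of your proposal.

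Your argument, however, proves a different (stronger) statement and in doing so uses facts not yet available at this point of the paper. First, the claim that the relations of Theorem \ref{antimodstr}(2)(a)--(c) ``collapse to just $x^2=0$ on the remaining generators'' is a misreading: those relations concern only the $\lambda_k^{G,\text{inv}}$ and the realification classes, and when $R(G,\mathbb{C})=0$ (so $t=0$) they become vacuous; they assert nothing about $\delta_\mathbb{R}^{G,\text{inv}}(\varphi_i)^2$ or $\delta_\mathbb{H}^{G,\text{inv}}(\theta_j)^2$. Indeed the paper says immediately after the corollary that it ``remains to find'' these squares; their vanishing is the content of Section 4 (Proposition \ref{sqzerounitary} and Theorem \ref{sqzeroantibz}(\ref{sqzero})) and cannot be invoked here. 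Second, the K\"ahler-differential universal-property argument you run is essentially the proof of Theorem \ref{sqzeroantibz}(\ref{antibz}), which the paper deduces \emph{from} this corollary together with the squares-vanishing; importing it here inverts the paper's logical order (it is not literally circular, since the proof of the squares-vanishing does not pass through this corollary, but it establishes the ring isomorphism rather than the module isomorphism actually asserted). Dropping the squares-vanishing and the universal-property step, and arguing only at the level of modules from Theorem \ref{antimodstr}(1) plus Seymour's description of $KR^*(G^-)$ and the structure of $KR^*_G(\text{pt})$, yields the corollary as stated.
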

As we can see, the module structure of $KR^*_{G}(G^-)$ is very similar to that of $KR^*_{G}(G)$, except that the degrees of the generators are different. Now it remains to find $\delta_\mathbb{R}^{G, \text{inv}}(\varphi_i)^2$ and $\delta_\mathbb{H}^{G, \text{inv}}(\theta_j)^2$ so as to complete the description of the ring structure of $KR^*_{G}(G^-)$. As it turns out, these squares are all zero, in stark contrast to the involutive automorphism case.

\section{Squares of the real and quaternionic type generators}
This section is devoted to proving that the squares of the real and quaternionic generators are zero, following the strategy outlined in \cite[Section 4]{Fo}. We use $T$ to denote the maximal torus of $U(n)$ consisting of diagonal matrices throughout this section.

Applying Brylinski-Zhang's result on the equivariant $K$-theory of compact connected Lie group $G$ with $\pi_1(G)$ torsion-free and Theorem \ref{equivstrthm}, we have
\begin{proposition}
For $\mathbb{F}=\mathbb{R}$ or $\mathbb{H}$, we have the following $KR_{(U(n), \sigma_\mathbb{F})}^*(\text{pt})$-module isomorphism
\[KR^*_{(U(n), \sigma_\mathbb{F})}(U(n), a_\mathbb{F})\cong \Omega_{KR^*_{(U(n), \sigma_\mathbb{F})}(\text{pt})/KR^*(\text{pt})}\]
The set $\{\delta_\mathbb{R}^{G, \text{inv}}(\sigma_n), \delta_\mathbb{R}^{G, \text{inv}}(\bigwedge^2\sigma_n), \cdots, \delta_\mathbb{R}^{G, \text{inv}}(\bigwedge^n\sigma_n)\}$ is a set of primitive generators for the case $\mathbb{F}=\mathbb{R}$, while $\{\delta_\mathbb{H}^{G, \text{inv}}(\sigma_{2m}), \delta_\mathbb{R}^{G, \text{inv}}(\bigwedge^2\sigma_{2m}), \cdots, \delta_\mathbb{R}^{G, \text{inv}}(\bigwedge^{2m}\sigma_{2m})\}$ is a set of primitive generators for the case $\mathbb{F}=\mathbb{H}$. 
\end{proposition}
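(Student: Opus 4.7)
The plan is to apply Corollary \ref{antimodstrnocpx} directly, which requires verifying that $R(U(n), \mathbb{C}) = 0$ for both choices of Real structure, and then to identify the advertised primitive generators with the images under $\delta^{G, \text{inv}}$ of a ring-generating set of $R(U(n))$.

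First, I would recall that $R(U(n)) = \mathbb{Z}[\sigma_n, \wedge^2 \sigma_n, \ldots, \wedge^{n-1}\sigma_n, (\wedge^n\sigma_n)^{\pm 1}]$ is multiplicatively generated by the fundamental representations $\wedge^k \sigma_n$. Since $\overline{\sigma_\mathbb{F}^*}$ is a ring automorphism of $R(U(n))$, to conclude $R(U(n), \mathbb{C}) = 0$ it suffices to exhibit a Real or Quaternionic structure on $\sigma_n$ in each case, because exterior powers then inherit such structures. For $\sigma_\mathbb{F} = \sigma_\mathbb{R}$, ordinary coordinate-wise complex conjugation on $\mathbb{C}^n$ is an anti-linear involution that intertwines the $U(n)$-action with $\sigma_\mathbb{R}$, so $\sigma_n \in RR(U(n), \mathbb{R})$; hence every $\wedge^k \sigma_n$ lies in $RR(U(n), \mathbb{R})$. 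For $\sigma_\mathbb{F} = \sigma_\mathbb{H}$ on $U(2m)$, the map $v \mapsto J_m \bar v$ on $\mathbb{C}^{2m}$ squares to $-\mathrm{Id}$ and satisfies the intertwining relation with $\sigma_\mathbb{H}(g) = J_m \bar g J_m^{-1}$, so $\sigma_{2m}$ is Quaternionic; the induced anti-linear endomorphism $J_V^{\wedge k}$ on $\wedge^k \sigma_{2m}$ squares to $(-1)^k$, placing each $\wedge^k \sigma_{2m}$ in $RR(U(2m), \mathbb{R}) \oplus RH(U(2m), \mathbb{R})$ according to the parity of $k$. In either case, no fundamental representation sits in $R(U(n), \mathbb{C})$, so this group vanishes.

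With $R(U(n), \mathbb{C}) = 0$ established, Corollary \ref{antimodstrnocpx} immediately yields the $KR^*_{(U(n), \sigma_\mathbb{F})}(\mathrm{pt})$-module isomorphism
\[KR^*_{(U(n), \sigma_\mathbb{F})}(U(n), a_\mathbb{F}) \cong \Omega_{KR^*_{(U(n), \sigma_\mathbb{F})}(\mathrm{pt}) / KR^*(\mathrm{pt})}.\]
Because the derivation of Proposition \ref{antider} sends a ring-generating set of $R(U(n))$ (namely the fundamental representations) to a module-generating set of the differentials, the claimed collection of $\delta_\mathbb{F}^{G, \text{inv}}(\wedge^k \sigma_n)$ constitutes a set of primitive generators, with the type subscript $\mathbb{F}$ chosen to match the Real/Quaternionic type determined above.

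The main obstacle I anticipate is the case-by-case bookkeeping of Real versus Quaternionic type for the higher exterior powers under $\sigma_\mathbb{H}$, together with verification that the intertwining conventions from the definition of $\mathcal{R}ep_\mathbb{R}(G)$ and $\mathcal{R}ep_\mathbb{H}(G)$ are satisfied by the explicit structures $v \mapsto J_m \bar v$ and their exterior powers. Once this type classification is settled, everything else follows formally from the structural results already in hand, and no new $KR$-theoretic input is required beyond Corollary \ref{antimodstrnocpx} and the derivation in Proposition \ref{antider}.
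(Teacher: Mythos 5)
Your overall strategy is the right one in spirit, and your type bookkeeping for the exterior powers (complex conjugation giving a Real structure on $\sigma_n$ under $\sigma_\mathbb{R}$; $v\mapsto J_m\bar v$ giving a Quaternionic structure on $\sigma_{2m}$ under $\sigma_\mathbb{H}$, with $J_V^{\wedge k}$ squaring to $(-1)^k$ so that the type alternates with the parity of $k$) is a useful piece of the argument that the paper leaves implicit. Your deduction that $R(U(n),\mathbb{C})=0$ is also essentially sound, provided you make explicit the final step: $\rho\mapsto\overline{\sigma_G^*}\rho$ is a ring automorphism permuting the basis of irreducibles, so if it fixes a generating set it is the identity and hence fixes every irreducible class.

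The genuine gap is your reliance on Corollary \ref{antimodstrnocpx}. That corollary sits under the paper's blanket hypothesis that $G$ is compact, connected and \emph{simply-connected}, and its derivation (via Theorem \ref{antimodstr} and the preceding discussion) uses the decomposition of the fundamental representations of a simply-connected group from Seymour's Lemma 5.5. Since $\pi_1(U(n))=\mathbb{Z}$, the group $U(n)$ does not satisfy the hypothesis, so you cannot ``apply Corollary \ref{antimodstrnocpx} directly.'' What the paper actually does is bypass that corollary: it invokes Brylinski--Zhang's theorem in its torsion-free-$\pi_1$ form (Theorem \ref{eqderivation}(2)), which does cover $U(n)$, together with the structure theorem for Real equivariantly formal spaces (Theorem \ref{equivstrthm}). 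To repair your argument you would need to (i) note that $U(n)$ is weakly equivariantly formal by Remark \ref{gequivformal} because $\pi_1(U(n))$ is torsion-free and $R(U(n))$ is a Laurent polynomial ring on the $\bigwedge^k\sigma_n$, (ii) check that $(U(n),a_\mathbb{F})$ is Real equivariantly formal, i.e.\ that the generators $\delta_\mathbb{F}^{\text{inv}}(\bigwedge^k\sigma_n)$ of $KR^*(U(n),a_\mathbb{F})$ admit Real equivariant lifts giving a $KR^*(\text{pt})$-module section, and then (iii) apply Theorem \ref{equivstrthm} with $R(U(n),\mathbb{C})=0$ as you verified. Steps (i)--(ii) are exactly the content your appeal to the corollary silently assumes, and they are where the torsion-free-$\pi_1$ (rather than simply-connected) hypothesis must be seen to suffice.
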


\begin{corollary}\label{antiformality}
	We have the following isomorphism
	\[KR^*_{(U(n), \sigma_\mathbb{F})}(U(n), a_\mathbb{F})\cong \Omega_{R(U(n))/\mathbb{Z}}\otimes KR^*(\text{pt})\]
	as ungraded $KR^*(\text{pt})$-modules.
\end{corollary}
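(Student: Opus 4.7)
My plan is to combine the preceding Proposition with an identification of the coefficient ring $KR^*_{(U(n), \sigma_\mathbb{F})}(\text{pt})$ as an ungraded $KR^*(\text{pt})$-module, and then propagate this identification through the Kähler differentials construction.

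First I would verify that $(U(n), \sigma_\mathbb{F})$ admits no Real representation of complex type, i.e.\ $R(U(n), \mathbb{C}) = 0$ for either $\sigma_\mathbb{F}$. For $\sigma_\mathbb{R}$ this follows from the standard character computation on the diagonal torus $T$: since $\overline{g} = g^{-1}$ for $g \in T$, one has $\overline{\chi_V(\overline{g})} = \chi_V(g)$, so every complex irrep of $U(n)$ is self-conjugate under $\sigma_\mathbb{R}$. For $\sigma_\mathbb{H}$ on $U(2m)$, conjugation by $J_m$ is an inner automorphism of $U(2m)$, so composing it with complex conjugation again yields an anti-involution under which every irrep is self-conjugate.

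Next I would apply Theorem \ref{equivstrthm} with $X = \text{pt}$. Because $R(G, \mathbb{C}) = 0$, it gives a $KR^*(\text{pt})$-module isomorphism
\[KR^*_G(\text{pt}) \cong (RR(G, \mathbb{R}) \oplus RH(G, \mathbb{R})) \otimes_\mathbb{Z} KR^*(\text{pt}).\]
Combining this with the classical type classification of self-conjugate complex representations---every such irrep carries either a canonical real or a canonical quaternionic structure, yielding an additive splitting $R(U(n)) \cong RR(G, \mathbb{R}) \oplus RH(G, \mathbb{R})$---one obtains
\[KR^*_G(\text{pt}) \cong R(U(n)) \otimes_\mathbb{Z} KR^*(\text{pt})\]
as ungraded $KR^*(\text{pt})$-modules.

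Finally I invoke the preceding Proposition, which realises $KR^*_G(U(n), a_\mathbb{F})$ as $\Omega_{KR^*_G(\text{pt})/KR^*(\text{pt})}$, equivalently the exterior algebra over $KR^*_G(\text{pt})$ on the $n$ primitive generators $\delta_\mathbb{F}^{G, \text{inv}}(\bigwedge^k\sigma_n)$. Under the identification of $KR^*_G(\text{pt})$ with $R(U(n)) \otimes_\mathbb{Z} KR^*(\text{pt})$ these generators correspond to $d(\bigwedge^k \sigma_n) \otimes 1$, so the exterior algebra becomes
\[\textstyle\bigwedge^*_{R(U(n))}\bigl(d(\bigwedge^k\sigma_n) : k = 1, \ldots, n\bigr) \otimes_\mathbb{Z} KR^*(\text{pt}) \;=\; \Omega_{R(U(n))/\mathbb{Z}} \otimes_\mathbb{Z} KR^*(\text{pt}),\]
yielding the desired ungraded $KR^*(\text{pt})$-module isomorphism. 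The main obstacle is the additive identification $R(U(n)) \cong RR(G, \mathbb{R}) \oplus RH(G, \mathbb{R})$, especially for $\sigma_\mathbb{H}$, where $\bigwedge^k \sigma_{2m}$ has real or quaternionic commuting field according to the parity of $k$; one must trace through the type classification carefully to confirm that each self-conjugate irrep falls into exactly one of the two summands.
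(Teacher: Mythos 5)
Your argument is correct and is essentially the route the paper intends (the paper states this corollary without proof, as an immediate consequence of the preceding Proposition together with the ungraded identification $KR^*_{(U(n),\sigma_\mathbb{F})}(\text{pt})\cong R(U(n))\otimes KR^*(\text{pt})$, which rests exactly on $R(U(n),\mathbb{C})=0$ and the real/quaternionic type dichotomy you describe). Your explicit verification of $R(U(n),\mathbb{C})=0$ for both involutions and the matching of the generators $\delta_\mathbb{F}^{G,\text{inv}}(\bigwedge^k\sigma_n)$ with $d(\bigwedge^k\sigma_n)\otimes 1$ fills in precisely the details the paper leaves implicit.
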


\begin{definition}
	Let 
	\[p_{G, \text{inv}}^*: KR^*_{(U(n), \sigma_\mathbb{R})}(U(n), a_\mathbb{R})\to KR^*_{(T, \sigma_\mathbb{R})}(T, \text{Id})\]
	be the restriction map and the map
	\[q_{G, \text{inv}}^*: KR^*_{(U(2m), \sigma_\mathbb{H})}(U(2m), a_\mathbb{H})\to KR^*_{(U(2m), \sigma_\mathbb{H})}(U(2m)/T\times T, \sigma_\mathbb{H}\times\text{Id})\]
	induced by the Weyl covering map 
	\begin{align*}
		q_G: U(2m)/T\times T&\to U(2m)\\
		(gT, t)&\mapsto gtg^{-1}
	\end{align*}
\end{definition}

\begin{proposition}\label{antiatiyahmap}
	Identifying $KR^*_{(T, \sigma_\mathbb{R})}(T, \text{Id})$ with $RR(T, \sigma_\mathbb{R})\otimes KR^*(T, \text{Id})$, we have
	\[p_{G, \text{inv}}^*(\delta_\mathbb{R}^{G, \text{inv}}(\bigwedge\nolimits^k\sigma_n))=\sum_{1\leq j_1<\cdots< j_k\leq n}e_{j_1}\cdots e_{j_k}\otimes\delta_\mathbb{R}^{\text{inv}}(e_{j_1}+\cdots+e_{j_k})\]
	where $e_i$ is the 1-dimensional Real representation of $(T, \sigma_\mathbb{R})$ with weight being the $i$-th standard basis vector of the weight lattice. Similarly, identifying $KR_{(U(2m), \sigma_\mathbb{H})}(U(2m)/T\times T, \sigma_\mathbb{H}\times\text{Id})$ with $\mathbb{Z}[e_1^{\mathbb{H}}, \cdots, e_{2m}^{\mathbb{H}}, (e_1^\mathbb{H}\cdots e_{2m}^\mathbb{H})^{-1}]\otimes KR^*(T, \text{Id})$ (cf. \cite[Proposition 4.25]{Fo}), where $e_i^\mathbb{H}$ is the degree $-4$ class in $KR_{(U(2m), \sigma_\mathbb{H})}(U(2m)/T, \sigma_\mathbb{H})$ represented by the Quaternionic line bundle $U(2m)\times_T\mathbb{C}_{e_i}$, we have, for $\mathbb{F}=\mathbb{R}$ or $\mathbb{H}$ (depending on the parity of $k$), 
	\[q_{G, \text{inv}}^*(\delta_\mathbb{F}^{G, \text{inv}}(\bigwedge\nolimits^k\sigma_{2m}))=\sum_{1\leq j_1<\cdots<j_k\leq 2m}e_{j_1}^\mathbb{H}\cdots e_{j_k}^\mathbb{H}\otimes\delta_\mathbb{R}^\text{inv}(e_{j_1}+\cdots+e_{j_k})\]
\end{proposition}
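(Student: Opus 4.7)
The plan is to reduce the proposition to a computation on one-dimensional weight summands, by pulling back the complex of Real vector bundles of Proposition \ref{cpxvecbdleinv} along the inclusion $T \hookrightarrow U(n)$ (for $p^*_{G,\text{inv}}$) and along the Weyl covering $q_G$ (for $q^*_{G,\text{inv}}$), and then invoking the derivation property of Proposition \ref{antider}. The first step is to observe that both maps are Real and $G$-equivariant, so that they send the complex representing $\delta^{G,\text{inv}}_{\mathbb{F}}(\rho)$ to the analogous complex for the restricted representation. This yields, in the $p^*$ case,
\[ p^*_{G,\text{inv}}\big(\delta^{G,\text{inv}}_{\mathbb{R}}(\bigwedge\nolimits^k \sigma_n)\big) = \delta^{T,\text{inv}}_{\mathbb{R}}\big(\bigwedge\nolimits^k \sigma_n\big|_T\big), \]
and an analogous identity for $q^*_{G,\text{inv}}$ after identifying the pullback along $q_G$ with the corresponding construction on $U(2m)/T \times T$.

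Next, I would use the weight decomposition $\bigwedge\nolimits^k \sigma_n|_T = \bigoplus_{j_1 < \cdots < j_k} e_{j_1} \cdots e_{j_k}$ as a direct sum of one-dimensional Real representations of $(T, \sigma_{\mathbb{R}})$ together with additivity of $\delta^{T,\text{inv}}_{\mathbb{R}}$, which reduces the problem to computing $\delta^{T,\text{inv}}_{\mathbb{R}}(e_{j_1} \cdots e_{j_k})$ on each weight-monomial. Applying the derivation property to this tensor product yields
\[ \delta^{T,\text{inv}}_{\mathbb{R}}(e_{j_1} \cdots e_{j_k}) = \sum_{\ell=1}^k e_{j_1} \cdots \widehat{e_{j_\ell}} \cdots e_{j_k} \cdot \delta^{T,\text{inv}}_{\mathbb{R}}(e_{j_\ell}). \]
The key intermediate identification is that, under the splitting $KR^*_{(T,\sigma_{\mathbb{R}})}(T, \text{Id}) \cong RR(T, \sigma_{\mathbb{R}}) \otimes KR^*(T, \text{Id})$, one has $\delta^{T,\text{inv}}_{\mathbb{R}}(e_i) = e_i \otimes \delta^{\text{inv}}_{\mathbb{R}}(e_i)$. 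This follows from the canonical factorization of the underlying Real equivariant bundle $T \times \mathbb{R} \times \mathbb{C} \times (\mathbb{C}_{e_i} \oplus \mathbb{C}_{e_i})$ as $e_i$ tensored with the bundle $T \times \mathbb{R} \times \mathbb{C} \times (\mathbb{C} \oplus \mathbb{C})$ carrying the trivial equivariant structure, together with the fact that the middle map and the Real structure specified in Proposition \ref{cpxvecbdleinv} respect this factorization. Substituting and collecting tensor factors gives
\[ \delta^{T,\text{inv}}_{\mathbb{R}}(e_{j_1} \cdots e_{j_k}) = e_{j_1} \cdots e_{j_k} \otimes \big(\delta^{\text{inv}}_{\mathbb{R}}(e_{j_1}) + \cdots + \delta^{\text{inv}}_{\mathbb{R}}(e_{j_k})\big) = e_{j_1} \cdots e_{j_k} \otimes \delta^{\text{inv}}_{\mathbb{R}}(e_{j_1} + \cdots + e_{j_k}), \]
and summing over all $k$-subsets produces the desired formula for $p^*_{G,\text{inv}}$.

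The computation for $q^*_{G,\text{inv}}$ proceeds in parallel. Here the analog of the decomposition $KR^*_T(T) \cong RR(T) \otimes KR^*(T)$ is the identification of $KR^*_{(U(2m),\sigma_{\mathbb{H}})}(U(2m)/T \times T)$ with $\mathbb{Z}[e_1^{\mathbb{H}}, \ldots, e_{2m}^{\mathbb{H}}, (e_1^{\mathbb{H}} \cdots e_{2m}^{\mathbb{H}})^{-1}] \otimes KR^*(T, \text{Id})$ from \cite[Proposition 4.25]{Fo}, where the role of the one-dimensional Real character $e_i$ in the equivariant factor is played by the Quaternionic line bundle class $e_i^{\mathbb{H}}$. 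The parity of $k$ controls whether $\bigwedge\nolimits^k \sigma_{2m}$ is a Real ($k$ even) or a Quaternionic ($k$ odd) representation, and hence whether one applies $\delta^{G,\text{inv}}_{\mathbb{R}}$ or $\delta^{G,\text{inv}}_{\mathbb{H}}$; in either case, the Bott class appearing in the proof of Proposition \ref{cpxvecbdleinv} combines with the fibrewise Real/Quaternionic structure so that the topological factor on the right-hand side is still generated by $\delta^{\text{inv}}_{\mathbb{R}}$ applied to the Real character $e_{j_1} + \cdots + e_{j_k}$ of $T$.

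I expect the principal obstacle to lie in verifying this canonical factorization cleanly, particularly in the Quaternionic case: one has to check that the two Bott-type factors in the tensor-product presentation of the complex in Proposition \ref{cpxvecbdleinv} combine with the Quaternionic fibrewise structure to realize $e_i^{\mathbb{H}}$ as a degree $-4$ class in the equivariant factor while still producing the degree $1$ class $\delta^{\text{inv}}_{\mathbb{R}}(e_i)$ in the topological factor, and that the sign conventions inherited from Proposition \ref{antider} match the plain additivity statement $\delta^{\text{inv}}_{\mathbb{R}}(e_{j_1} + \cdots + e_{j_k}) = \sum_{\ell} \delta^{\text{inv}}_{\mathbb{R}}(e_{j_\ell})$ on the nose.
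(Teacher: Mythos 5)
Your proposal is correct and follows essentially the same route as the paper, whose proof consists precisely of decomposing the pulled-back complex of Real vector bundles from Proposition \ref{cpxvecbdleinv} into a direct sum of summands indexed by the weights of $\bigwedge^k\sigma_n$ and identifying each summand as $e_{j_1}\cdots e_{j_k}\otimes\delta_{\mathbb{R}}^{\text{inv}}(e_{j_1}+\cdots+e_{j_k})$ (resp.\ its Quaternionic analogue). Your extra detour through the derivation property of Proposition \ref{antider} is harmless but unnecessary: once you have the factorization $\delta^{T,\text{inv}}_{\mathbb{R}}(\lambda)=\lambda\otimes\delta^{\text{inv}}_{\mathbb{R}}(\lambda)$ for a one-dimensional character, it applies directly to $\lambda=e_{j_1}\cdots e_{j_k}$ without expanding via the Leibniz rule, which also sidesteps the issue that in the Quaternionic case the classes $e_i^{\mathbb{H}}$ are not representations and so do not literally fit the derivation framework.
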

\begin{proof}
	The proof is similar to \cite[Lemma 4.19]{Fo}. The Proposition follows from the fact that the complex of $U(n)$-equivariant Real vector bundles representing $\delta_\mathbb{F}^{G, \text{inv}}(\bigwedge\nolimits^k\sigma_n)$, as in Proposition \ref{cpxvecbdleinv}, is decomposed into a direct sum of complexes of $T$-equivariant Real vector bundles, each of which corresponds to a weight of $\bigwedge^k\sigma_n$.
\end{proof}

\begin{proposition}\label{antiinj}
	Both $p_{G, \text{inv}}^*$ and $q_{G, \text{inv}}^*$ are injective. 
\end{proposition}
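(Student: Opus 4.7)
The plan is to identify both $p_{G,\mathrm{inv}}^*$ and $q_{G,\mathrm{inv}}^*$ with classical restriction maps on Grothendieck differentials tensored with $KR^*(\mathrm{pt})$, and then invoke the algebraic injectivity of $\iota_*:\Omega_{R(U(n))/\mathbb{Z}} \to \Omega_{R(T)/\mathbb{Z}}$, where $\iota: R(U(n)) \hookrightarrow R(T)$ is the standard inclusion.

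By Corollary \ref{antiformality} the source is $\Omega_{R(U(n))/\mathbb{Z}} \otimes KR^*(\mathrm{pt})$. For the target of $p^*_{G,\mathrm{inv}}$: the conjugation action of $T$ on itself is trivial and every character of $T$ is a Real representation of $(T, \sigma_\mathbb{R})$ of real type (so $R(T,\mathbb{C}) = RH(T,\mathbb{R}) = 0$), so Theorem \ref{equivstrthm} applied to the trivial action gives $KR^*_{(T,\sigma_\mathbb{R})}(T,\mathrm{Id}) \cong R(T) \otimes KO^*(T)$. Using $KO^*(T^n) \cong KO^*(\mathrm{pt}) \otimes \bigwedge(\xi_1,\ldots,\xi_n)$ as a ring, with $\xi_i := \delta_\mathbb{R}^\mathrm{inv}(e_i)$ (the relation $\xi_i^2 = 0$ follows from the null-homotopy of the reduced diagonal $S^1 \to S^1 \wedge S^1 \simeq S^2$), the substitution $\xi_i \leftrightarrow de_i/e_i$ then yields $KR^*_{(T,\sigma_\mathbb{R})}(T,\mathrm{Id}) \cong \Omega_{R(T)/\mathbb{Z}} \otimes KR^*(\mathrm{pt})$ as rings. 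Under these identifications, Proposition \ref{antiatiyahmap} exhibits $p^*_{G,\mathrm{inv}}$ as $\iota_* \otimes \mathrm{id}_{KR^*(\mathrm{pt})}$. The analogous setup for $q^*_{G,\mathrm{inv}}$ replaces the target identification with \cite[Proposition 4.25]{Fo}'s description of $KR^*_{(U(2m),\sigma_\mathbb{H})}(U(2m)/T,\sigma_\mathbb{H})$ in terms of Quaternionic line bundle classes $e_i^\mathbb{H}$.

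Now $\iota_*$ is an injection with $\mathbb{Z}$-torsion-free cokernel: by the Pittie-Steinberg theorem (or direct computation for $U(n)$), $R(T)$ is a free $R(U(n))$-module of rank $|W|$, so the inclusion splits as a $\mathbb{Z}$-module map, and the splitting persists at the level of Kähler differentials via the Weyl-discriminant Jacobian. Tensoring with $KR^*(\mathrm{pt})$ therefore preserves injectivity. The main obstacle is the bookkeeping required to set up the target identifications in their ring-theoretic form---especially for $q^*_{G,\mathrm{inv}}$ where the Quaternionic degree-$(-4)$ classes $e_i^\mathbb{H}$ must be carefully threaded through the argument---but once this is in place, the proof reduces cleanly to the classical algebra of symmetric polynomials and their Grothendieck differentials.
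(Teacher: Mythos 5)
Your overall architecture matches the paper's: identify $p^*_{G,\mathrm{inv}}$ and $q^*_{G,\mathrm{inv}}$, via Corollary \ref{antiformality} and Proposition \ref{antiatiyahmap}, with the complex restriction map $i^*\colon K^*_{U(n)}(U(n))\to K^*_T(T)$ (equivalently $\Omega_{R(U(n))/\mathbb{Z}}\to\Omega_{R(T)/\mathbb{Z}}$) tensored with $\mathrm{Id}_{KR^*(\mathrm{pt})}$, and then show that this tensored map is still injective. Your identifications of the targets are fine and consistent with what the paper does.

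The gap is in the last step, which is the actual content of the proposition. You assert that $\iota_*\colon\Omega_{R(U(n))/\mathbb{Z}}\to\Omega_{R(T)/\mathbb{Z}}$ is injective with $\mathbb{Z}$-torsion-free cokernel because ``$R(T)$ is free over $R(U(n))$, so the inclusion splits as a $\mathbb{Z}$-module map, and the splitting persists at the level of K\"ahler differentials via the Weyl-discriminant Jacobian.'' Pittie--Steinberg splits $R(U(n))\hookrightarrow R(T)$ and hence handles the first stage $\Omega_{R(U(n))/\mathbb{Z}}\to R(T)\otimes_{R(U(n))}\Omega_{R(U(n))/\mathbb{Z}}$ (this is the role played in the paper by the split injectivity of $K^*_{U(n)}(U(n))\to K^*_T(U(n))$ from \cite[Proposition 4.9]{At3}). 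But the splitting does \emph{not} ``persist'' to the second stage $R(T)\otimes_{R(U(n))}\Omega_{R(U(n))/\mathbb{Z}}\to\Omega_{R(T)/\mathbb{Z}}$: on top exterior powers this map is multiplication by the Weyl denominator $d_{U(n)}=\prod_{i<j}(e_i-e_j)$, which is a non-unit, so the map is not split over $R(T)$ and its cokernel is a nontrivial quotient. What you actually need --- and what the paper proves --- is that this map remains injective after tensoring with $KR^*(\mathrm{pt})=KO^*(\mathrm{pt})$, whose torsion is concentrated at the prime $2$ (the classes $\eta,\eta^2$). Concretely, one must check that $r\,d_{U(n)}\otimes\prod_i\delta(e_i)\neq 0$ for every nonzero $r\in KR^*(\mathrm{pt})$, i.e.\ that $d_{U(n)}$ is a non-zero-divisor in $R(T)\otimes\mathbb{F}_2$ (true, since that ring is a domain and $d_{U(n)}$ is primitive), and then propagate injectivity from the top exterior degree to all degrees using the exterior-algebra pairing (this is \cite[Lemmas 4.20 and 4.21]{Fo} in the paper's proof). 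Your write-up skips exactly this verification, and the justification you offer in its place is, read literally, false; so as it stands the crucial step is asserted rather than proved. The claim itself is correct, so the proof is repairable by inserting the Weyl-denominator computation and the mod-$2$ non-zero-divisor argument.
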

\begin{proof}
	By \cite[Lemma 4.19 and Proposition 4.25]{Fo}, and Corollary \ref{antiformality} and Proposition \ref{antiatiyahmap}, we can identify both $p_{G, \text{inv}}^*$ and $q_{G, \text{inv}}^*$ with the map
	\[i^*\otimes \text{Id}_{KR^*(\text{pt})}: K^*_{U(n)}(U(n))\otimes KR^*(\text{pt})\to K_T^*(T)\otimes KR^*(\text{pt})\]
	where the restriction map $i^*$ can factor through $K_T^*(U(n))$ as 
	\[K_{U(n)}^*(U(n))\stackrel{i_1^*}{\longrightarrow} K_T^*(U(n))\stackrel{i_2^*}{\longrightarrow} K_T^*(T)\]
	$i_1^*\otimes \text{Id}_{KR^*(\text{pt})}$ is injective because $i_1^*$ is split injective by \cite[Proposition 4.9]{At3}. By adapting \cite[Lemma 4.20]{Fo} to the case $G=U(n)$, we have
	\[i_2^*\left(\prod_{i=1}^n\delta_T(\bigwedge\nolimits^i\sigma_n)\right)=d_{U(n)}\otimes\prod_{i=1}^n\delta(e_i)\]
	where $d_{U(n)}$ is the Weyl denominator for $U(n)$. By \cite[Lemma 4.21]{Fo} and the fact that $rd_{U(n)}\otimes\prod_{i=1}^n\delta(e_i)\neq 0$ for all $r\in KR^*(\text{pt})\setminus\{0\}$, $i_2^*\otimes\text{Id}_{KR^*(\text{pt})}$ is injective as well. Thus $i^*\otimes \text{Id}_{KR^*(\text{pt})}$, as well as $p_{G, \text{inv}}^*$ and $q_{G, \text{inv}}^*$, are injective. 
\end{proof}
\begin{lemma}\label{circlesqzero}
	Let $e$ be the standard representation of $S^1$. Then $\delta_\mathbb{R}^\text{inv}(e)^2=0$ in $KR^*(S^1, \text{Id})$.
\end{lemma}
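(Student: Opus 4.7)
The plan is to reduce to the fact that the target group $KR^2(S^1, \mathrm{Id})$ is already trivial, so the square vanishes for purely dimensional reasons. First I would identify $KR^*(X, \mathrm{Id})$ with $KO^*(X)$, which is legitimate because a Real bundle over a space with trivial involution is just the complexification of a real bundle; under this identification the class $\delta_\mathbb{R}^{\mathrm{inv}}(e)^2$ sits inside $KO^2(S^1)$.

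Next I would split this group via the basepoint retraction as $KO^2(S^1) \cong KO^2(\mathrm{pt}) \oplus \tilde{KO}^2(S^1)$ and compute each summand using Bott periodicity: $KO^2(\mathrm{pt}) \cong KO^{-6}(\mathrm{pt})$ and $\tilde{KO}^2(S^1) \cong KO^1(\mathrm{pt}) \cong KO^{-7}(\mathrm{pt})$, both of which vanish by the standard $KO$-coefficient table. Hence $KR^2(S^1, \mathrm{Id}) = 0$ and the claim follows immediately.

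If a more geometric justification is preferred, one can note that the representative $e \colon (S^1, \mathrm{Id}) \to (U(\infty), a_\mathbb{R})$, $z \mapsto z$, sends the basepoint $1$ to the identity and hence $\delta_\mathbb{R}^{\mathrm{inv}}(e)$ lies in the reduced group $\tilde{KR}^1(S^1, \mathrm{Id})$. The multiplicative structure on reduced theory factors through the reduced diagonal $S^1 \to S^1 \wedge S^1 \cong S^2$, which is null-homotopic because $\pi_1(S^2) = 0$, forcing the square to vanish in the reduced group and hence (since $KR^2(\mathrm{pt}) = 0$ too) in the full group. I expect essentially no obstacle; the lemma's real purpose is to serve as the base case which, combined with a K\"unneth-type decomposition on the torus and the injectivity of $p_{G,\mathrm{inv}}^*$ and $q_{G,\mathrm{inv}}^*$ from Proposition~\ref{antiinj}, will propagate the vanishing to the generators $\delta_\mathbb{R}^{G,\mathrm{inv}}(\varphi_i)^2$ and $\delta_\mathbb{H}^{G,\mathrm{inv}}(\theta_j)^2$ in the subsequent results.
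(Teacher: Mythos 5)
Your proposal is correct and follows the same route as the paper: the paper's proof is exactly the observation that $\delta_\mathbb{R}^{\text{inv}}(e)^2$ lives in $KR^{2}(S^1,\text{Id})\cong KR^{-6}(S^1,\text{Id})$, which vanishes for degree reasons via Bott periodicity; your splitting into $KO^2(\text{pt})\oplus\widetilde{KO}^2(S^1)$ just makes the computation slightly more explicit than the paper's one-line version. The alternative argument via the null-homotopic reduced diagonal $S^1\to S^1\wedge S^1$ is a valid bonus but not needed.
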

\begin{proof}
	Note that $\delta_\mathbb{R}^\text{inv}(e)\in KR^{-7}(S^1, \text{Id})$. So $\delta_\mathbb{R}^\text{inv}(e)^2\in KR^{-6}(S^1, \text{Id})\cong KR^{-7}(\text{pt})=0$. 
\end{proof}
\begin{proposition}\label{sqzerounitary}
	For $\mathbb{F}=\mathbb{R}$ or $\mathbb{H}$, $\delta_\mathbb{F}^{G, \text{inv}}(\sigma_n)^2=0$ in $KR_{(U(n), \sigma_\mathbb{F})}^*(U(n), a_\mathbb{F})$. 
\end{proposition}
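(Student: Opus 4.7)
The strategy is to pull back to the torus via the injective restriction map and then use bidegree counting together with graded commutativity to make the square collapse.

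First, by Proposition \ref{antiinj} the map $p^*_{G,\text{inv}}$ (respectively $q^*_{G,\text{inv}}$) is injective, so it suffices to show $p^*_{G,\text{inv}}(\delta_\mathbb{R}^{G,\text{inv}}(\sigma_n)^2)=0$ for the $\mathbb{R}$-case and the analogous statement for the $\mathbb{H}$-case. Specializing Proposition \ref{antiatiyahmap} to $k=1$ gives
\[
p^*_{G,\text{inv}}\bigl(\delta_\mathbb{R}^{G,\text{inv}}(\sigma_n)\bigr)=\sum_{j=1}^n e_j\otimes\delta_\mathbb{R}^{\text{inv}}(e_j),\qquad q^*_{G,\text{inv}}\bigl(\delta_\mathbb{H}^{G,\text{inv}}(\sigma_{2m})\bigr)=\sum_{j=1}^{2m} e_j^{\mathbb{H}}\otimes\delta_\mathbb{R}^{\text{inv}}(e_j).
\]
Since $T$ acts trivially on itself by conjugation, the identification of Proposition \ref{antiatiyahmap} is one of rings (a Künneth-type isomorphism), so we may square term by term.

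Next, expand the square. In the $\mathbb{R}$-case,
\[
p^*_{G,\text{inv}}\bigl(\delta_\mathbb{R}^{G,\text{inv}}(\sigma_n)\bigr)^2=\sum_{j}e_j^2\otimes\delta_\mathbb{R}^{\text{inv}}(e_j)^2+\sum_{i\neq j}e_ie_j\otimes\delta_\mathbb{R}^{\text{inv}}(e_i)\,\delta_\mathbb{R}^{\text{inv}}(e_j).
\]
Each diagonal term vanishes: the Real map $\pi_j\colon (T,\text{Id})\to(S^1,\text{Id})$ given by projection onto the $j$-th factor satisfies $\pi_j^*(e)=e_j$, hence by naturality $\delta_\mathbb{R}^{\text{inv}}(e_j)^2=\pi_j^*\bigl(\delta_\mathbb{R}^{\text{inv}}(e)^2\bigr)=0$ by Lemma \ref{circlesqzero}. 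For the off-diagonal terms, the classes $\delta_\mathbb{R}^{\text{inv}}(e_j)$ all sit in odd $KR$-degree, so graded commutativity gives $\delta_\mathbb{R}^{\text{inv}}(e_i)\delta_\mathbb{R}^{\text{inv}}(e_j)=-\delta_\mathbb{R}^{\text{inv}}(e_j)\delta_\mathbb{R}^{\text{inv}}(e_i)$; combined with $e_ie_j=e_je_i$ in $RR(T,\sigma_\mathbb{R})$, the $(i,j)$ and $(j,i)$ contributions pair up and cancel. Thus $p^*_{G,\text{inv}}\bigl(\delta_\mathbb{R}^{G,\text{inv}}(\sigma_n)^2\bigr)=0$, and injectivity of $p^*_{G,\text{inv}}$ finishes the $\mathbb{R}$-case.

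The $\mathbb{H}$-case is formally identical: the classes $e_j^{\mathbb{H}}$ live in even degree $-4$ and commute amongst themselves, while $\delta_\mathbb{R}^{\text{inv}}(e_j)$ remains odd, so the same diagonal/off-diagonal decomposition collapses by Lemma \ref{circlesqzero} and graded commutativity, and then injectivity of $q^*_{G,\text{inv}}$ concludes. The only subtlety I anticipate is verifying that the identifications of Proposition \ref{antiatiyahmap} are compatible with multiplication (so that squaring distributes over the tensor decomposition); this should follow from the fact that $T$ acts trivially on itself and on $U(2m)/T\times T$ the $U(2m)/T$-factor contributes the $e_j^{\mathbb{H}}$ and the $T$-factor contributes the $\delta_\mathbb{R}^{\text{inv}}(e_j)$, so the two families multiply independently in the expected way.
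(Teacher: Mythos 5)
Your proof is correct and follows exactly the route the paper takes: the paper's own proof is just the one-line citation ``This follows from Propositions \ref{antiatiyahmap} and \ref{antiinj} and Lemma \ref{circlesqzero},'' and you have supplied precisely the implicit details (pull back along the injective restriction to the torus, kill the diagonal terms via naturality from $S^1$ and Lemma \ref{circlesqzero}, and cancel the off-diagonal terms in pairs by graded commutativity of the odd-degree classes $\delta_\mathbb{R}^{\text{inv}}(e_j)$). No gaps; nothing further is needed.
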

\begin{proof}
	This follows from Propositions \ref{antiatiyahmap} and \ref{antiinj} and Lemma \ref{circlesqzero}. 
\end{proof}
The above results finally culminate in the main theorem of this note. 
\begin{theorem}\label{sqzeroantibz}
	\begin{enumerate}
		\item\label{sqzero} Let $G$ be a Real compact Lie group, and $\rho$ a Real (resp. Quaternionic) unitary representation of $G$. Then $\delta_\mathbb{F}^{G, \text{inv}}(\rho)^2=0$ in $KR_{G}^*(G^-)$ for $\mathbb{F}=\mathbb{R}$ (resp. $\mathbb{F}=\mathbb{H}$).
		\item\label{antibz} In particular, if $G$ is connected and simply-connected and $R(G, \mathbb{C})=0$, then $\delta_\mathbb{R}^{G, \text{inv}}\oplus \delta_\mathbb{H}^{G, \text{inv}}$ induces the following ring isomorphism
			\[KR^*_{G}(G^-)\cong\Omega_{KR^*_{G}(\text{pt})/KR^*(\text{pt})}\]
	\end{enumerate}
\end{theorem}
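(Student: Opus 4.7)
The plan is to reduce claim (\ref{sqzero}) to Proposition \ref{sqzerounitary} via the naturality of the construction $\delta_\mathbb{F}^{G,\text{inv}}$ under pullback along representations, and then to combine (\ref{sqzero}) with Corollary \ref{antimodstrnocpx} and Proposition \ref{antider} to obtain (\ref{antibz}) via the universal property of Grothendieck differentials.

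For claim (\ref{sqzero}), an $n$-dimensional Real unitary representation $\rho$ of $G$ amounts to a Lie group homomorphism $\rho\colon G\to U(n)$ satisfying $\rho\circ\sigma_G=\sigma_\mathbb{R}\circ\rho$. I would first verify that the induced conjugation action $g\cdot u=\rho(g)u\rho(g)^{-1}$ makes $(U(n),a_\mathbb{R})$ into a Real $(G,\sigma_G)$-space; the required compatibility $\sigma_G(g)\cdot a_\mathbb{R}(u)=a_\mathbb{R}(g\cdot u)$ reduces to the identity $\rho(g)^{T}=\overline{\rho(g)}^{-1}$, which holds since $\rho(g)$ is unitary. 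Consequently $\rho\colon G^-\to(U(n),a_\mathbb{R})$ is a Real $G$-equivariant map, inducing a ring homomorphism
\[\rho^*\colon KR^*_{(U(n),\sigma_\mathbb{R})}(U(n),a_\mathbb{R})\longrightarrow KR^*_G(G^-).\]
Direct inspection of the explicit complexes of Real equivariant vector bundles in Proposition \ref{cpxvecbdleinv} and the subsequent definition of $\delta_\mathbb{R}^{G,\text{inv}}$ shows that $\rho^*(\delta_\mathbb{R}^{G,\text{inv}}(\sigma_n))=\delta_\mathbb{R}^{G,\text{inv}}(\rho)$, so
\[\delta_\mathbb{R}^{G,\text{inv}}(\rho)^2=\rho^*\bigl(\delta_\mathbb{R}^{G,\text{inv}}(\sigma_n)^2\bigr)=0\]
by Proposition \ref{sqzerounitary}. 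The Quaternionic case runs in parallel with $(U(2m),\sigma_\mathbb{H})$ and $a_\mathbb{H}$ in place of $(U(n),\sigma_\mathbb{R})$ and $a_\mathbb{R}$.

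For claim (\ref{antibz}), claim (\ref{sqzero}) supplies $\delta_\mathbb{R}^{G,\text{inv}}(\varphi_i)^2=0$ and $\delta_\mathbb{H}^{G,\text{inv}}(\theta_j)^2=0$, while Proposition \ref{antider} supplies the derivation property. Extending the derivation $KR^*(\text{pt})$-linearly to all of $KR^*_G(\text{pt})$ and invoking the universal property of Grothendieck differentials produces a $KR^*_G(\text{pt})$-algebra homomorphism
\[\Phi\colon\Omega_{KR^*_G(\text{pt})/KR^*(\text{pt})}\longrightarrow KR^*_G(G^-),\]
well-defined precisely because the Kähler generators $d\varphi_i$ and $d\theta_j$ are sent to square-zero elements. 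Since $\Phi$ carries a natural set of $KR^*_G(\text{pt})$-module generators of $\Omega$ onto the $\delta_\mathbb{F}^{G,\text{inv}}(\cdot)$ generators appearing in Corollary \ref{antimodstrnocpx}, the module isomorphism established there identifies $\Phi$ as an isomorphism of $KR^*_G(\text{pt})$-modules, hence of rings.

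The principal technical obstacle should be the naturality identity $\rho^*(\delta_\mathbb{F}^{G,\text{inv}}(\sigma_n))=\delta_\mathbb{F}^{G,\text{inv}}(\rho)$ in claim (\ref{sqzero}): because the $G$-action on the target $(U(n),a_\mathbb{R})$ is itself defined through $\rho$, this is a matching of two a priori distinct equivariant lifts of the same underlying Real vector bundle, and requires careful unwinding of the explicit formulas in Proposition \ref{cpxvecbdleinv}. Once this is in hand, both parts of the theorem follow by formal manipulation.
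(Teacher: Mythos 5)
Your proposal is correct and follows essentially the same route as the paper: part (\ref{sqzero}) is obtained by pulling back the universal identity $\delta_\mathbb{F}^{G,\text{inv}}(\sigma_n)^2=0$ of Proposition \ref{sqzerounitary} along the Real equivariant map $\rho\colon G^-\to (U(n),a_\mathbb{F})$, using the explicit complexes of Proposition \ref{cpxvecbdleinv} to identify $\rho^*\delta_\mathbb{F}^{G,\text{inv}}(\sigma_n)$ with $\delta_\mathbb{F}^{G,\text{inv}}(\rho)$, and part (\ref{antibz}) then follows from the derivation property (Proposition \ref{antider}) together with the module isomorphism of Corollary \ref{antimodstrnocpx}. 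Your write-up merely makes explicit the equivariance check and the universal-property step that the paper leaves implicit.
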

\begin{proof}
	Note that the induced map $\rho^*: KR^*_{(U(n), \sigma_\mathbb{F})}(U(n), a_\mathbb{F})\to KR_{G}^*(G^-)$ sends $\delta_\mathbb{F}^{G, \text{inv}}(\sigma_n)$ to $\delta_\mathbb{F}^{G, \text{inv}}(\rho)$ by the interpretation of $\delta_\mathbb{F}^{G, \text{inv}}$ in Proposition \ref{cpxvecbdleinv}. Now part (\ref{sqzero}) follows from Proposition \ref{sqzerounitary}. Part (\ref{antibz}) follows from part (\ref{sqzero}), Proposition \ref{antider} and Corollary \ref{antimodstrnocpx}.
\end{proof}
Note that Theorems \ref{antimodstr} and \ref{sqzeroantibz} give a complete description of the ring structure of $KR_{G}^*(G^-)$. Part (\ref{antibz}) of Theorem \ref{sqzeroantibz} should be viewed as a generalization of Brylinski-Zhang's result in the context of $KR$-theory. 

Last but not least, we obtain, as a by-product, the following
\begin{corollary}\label{xsquare}
	If $G$ is a compact connected Real Lie group (not necessarily simply-connected) and $X$ a compact Real $G$-space, then for any $x$ in $KR^1_G(X)$ or $KR_G^{-3}(X)$, $x^2=0$. 
\end{corollary}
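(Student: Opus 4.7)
The plan is to reduce the corollary to Proposition \ref{sqzerounitary}, which gives $\delta_\mathbb{F}^{G, \text{inv}}(\sigma_n)^2 = 0$ in $KR^*_{(U(n), \sigma_\mathbb{F})}((U(n), a_\mathbb{F}))$, by exhibiting an arbitrary class in $KR^1_G(X)$ or $KR^{-3}_G(X)$ as a pullback of the universal class $\delta_\mathbb{R}^{G, \text{inv}}(\sigma_n)$ or $\delta_\mathbb{H}^{G, \text{inv}}(\sigma_{2m})$ along a map of Real equivariant pairs.

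First I would establish an equivariant Real analogue of Lemma \ref{anticlassifying}: for a compact Real $G$-space $X$, any class $x \in KR^1_G(X)$ is represented by a Real $G$-equivariant map $f\colon X \to (U(n), a_\mathbb{R})$ for some $n$, where the $G$-action on $U(n)$ is conjugation through a Real unitary representation $\rho\colon (G, \sigma_G) \to (U(n), \sigma_\mathbb{R})$. The pair $(\rho, f)$ then determines a ring homomorphism
\[
(\rho, f)^* \colon KR^*_{(U(n), \sigma_\mathbb{R})}((U(n), a_\mathbb{R})) \to KR^*_G(X),
\]
and a direct comparison with the bundle-theoretic description in Proposition \ref{cpxvecbdleinv} identifies $(\rho, f)^*(\delta_\mathbb{R}^{G, \text{inv}}(\sigma_n))$ with $x$. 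The parallel statement for $KR^{-3}_G(X)$ uses $(U(2m), a_\mathbb{H})$ together with a Quaternionic representation of $G$.

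Granted this identification, since $(\rho, f)^*$ is a ring homomorphism we obtain
\[
x^2 = (\rho, f)^*\bigl(\delta_\mathbb{R}^{G, \text{inv}}(\sigma_n)^2\bigr) = 0
\]
by Proposition \ref{sqzerounitary}; the $KR^{-3}_G(X)$ case follows mutatis mutandis from $\delta_\mathbb{H}^{G, \text{inv}}(\sigma_{2m})^2 = 0$. No simply-connectedness hypothesis on $G$ is used here, which matches the corollary's hypotheses.

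The main obstacle is the representability step, i.e.\ ensuring that every class in $KR^1_G(X)$ over a compact Real $G$-space is genuinely realized by such a finite-dimensional Real equivariant model, with the $G$-equivariance encoded by a Real (resp.\ Quaternionic) unitary representation. I expect this to follow from a Real equivariant adaptation of Segal's classifying-space construction for equivariant $K$-theory, combined with the fact that any compact Real Lie group carries sufficiently many Real and Quaternionic unitary representations by a Peter--Weyl argument in the Real setting.
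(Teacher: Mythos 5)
Your reduction mechanism is sound: restriction along a Real homomorphism $\rho\colon (G,\sigma_G)\to (U(n),\sigma_{\mathbb{F}})$ followed by pullback along a Real $G$-map $f\colon X\to (U(n),a_{\mathbb{F}})$ is indeed a ring homomorphism, and if it carried $\delta_{\mathbb{F}}^{G,\text{inv}}(\sigma_n)$ to $x$ then Proposition \ref{sqzerounitary} would finish the argument exactly as you say (this is the same mechanism the paper uses to prove Theorem \ref{sqzeroantibz}(\ref{sqzero}) for $X=G^-$). The problem is that the step you flag as ``the main obstacle'' is not a routine adaptation but the entire content of the corollary: you need that \emph{every} class of $KR^1_G(X)$ (resp.\ $KR^{-3}_G(X)$) on an arbitrary compact Real $G$-space is represented by a Real $G$-equivariant map into some $(U(V),a_{\mathbb{R}})$ (resp.\ $(U(V),a_{\mathbb{H}})$) with $V$ a Real (resp.\ Quaternionic) $G$-representation. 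A Real equivariant Peter--Weyl argument gives you enough representations, but it does not give you an equivariant classifying-space theorem in these particular degrees: one must show that the colimit of $[X,(U(V),a_{\mathbb{F}})]^G_{\mathbb{R}}$ over Real (or Quaternionic) representations $V$ computes $KR^1_G$ (resp.\ $KR^{-3}_G$) rather than, say, $KR^{-1}_G$ or $KR^{-5}_G$ --- the degree is controlled by whether the involution on the target is $a_{\mathbb{F}}$ or $\sigma_{\mathbb{F}}$, and even the non-equivariant version of this bookkeeping is the point of Lemma \ref{anticlassifying}. No such equivariant representability statement is available in the references, and your proposal does not supply a proof of it.

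The paper's proof is engineered precisely to avoid this. It pulls back along the projections $X\times EG^n\to X$, where $EG^n$ is the $n$-fold join of $G$ with free $G$-action; on these spaces $KR^*_G(X\times EG^n)\cong KR^*(X\times EG^n/G)$ is \emph{non-equivariant}, so the already-proved Lemma \ref{anticlassifying} (plus the non-equivariant consequence of Proposition \ref{sqzerounitary}) shows each $\pi_n^*(x)^2=0$. The price is having to prove that $\pi^*=\varprojlim \pi_n^*$ is injective, which the paper gets from the Real version of the Atiyah--Segal completion argument: $\ker(\pi^*)=\bigcap_n IR^n\cdot KR^*_G(X)$, and $\bigcap_n IR^n=(0)$ because $RR(G)\hookrightarrow R(G)$ and $\bigcap_n I^n=(0)$. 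If you want to salvage your more direct route, you would need to prove the Real equivariant analogue of Lemma \ref{anticlassifying}; otherwise the completion-theorem detour is the way to close the gap.
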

\begin{proof}
	Let $EG^n$ be the join of $n$ copies of $G$, with the Real structure induced by $\sigma_G$ and $G$-action by the left-translation of $G$. Let $\pi_n^*: KR^*_G(X)\to KR_G^*(X\times EG^n)$ be the map induced by projection onto $X$. Consider the map
	\[\pi^*:=\lim_{\stackrel{\longleftarrow}{n}} \pi_n^*: KR_G^*(X)\to \lim_{\stackrel{\longleftarrow}{n}} KR_G^*(X\times EG^n)\]
	By adapting the proof of \cite[Corollary 2.3]{AS} to the Real case, $\text{ker}(\pi^*)=\bigcap_{n\in\mathbb{N}} IR^n\cdot KR_G^*(X)$, where $IR$ is the augmentation ideal of $RR(G)$. Moreover, $\bigcap_{n\in\mathbb{N}} I^n=(0)$ where $I$ is the augmentation ideal of $R(G)$ (cf. the Note immediately preceding \cite[Section 4.5]{AH}), and the map $RR(G)\to R(G)$ forgetting the Real structure is injective (cf. \cite[Proposition 2.17(1)]{Fo}). It follows that $\bigcap_{n\in\mathbb{N}}IR^n=(0)$ and thus $\pi^*$ is injective. Now it suffices to show that $\pi^*(x)^2=0$. Using Lemma \ref{anticlassifying} and compactness of $X\times EG^n/G$, $\pi^*_n(x)\in KR^*_G(X\times EG^n)=KR^*(X\times EG^n/G)$ can be represented by a Real map $f_n: X\times EG^n/G\to (U(k_n), a_\mathbb{F})$ for some $k_n$. So $\pi_n^*(x)=f_n^*\delta_\mathbb{F}^{\text{inv}}(\sigma_{k_n})$, and $\pi^*_n(x)^2=f_n^*\delta_\mathbb{F}^{\text{inv}}(\sigma_{k_n})^2=0$ because we have that $\delta_{\mathbb{F}}^{\text{inv}}(\sigma_{k_n})^2=0$ by applying the forgetful map $KR_{(U(n), \sigma_\mathbb{F})}^*(U(n), a_\mathbb{F})\to KR^*(U(n), a_\mathbb{F})$ to the equation $\delta_\mathbb{F}^{G, \text{inv}}(\sigma_{k_n})^2=0$ (see Proposition \ref{sqzerounitary}). Since $\pi^*(x)^2$ is the inverse limit of $\pi_n^*(x)^2=0$, $\pi^*(x)^2=0$ as desired. 
\end{proof}
\begin{remark}
	Corollary \ref{xsquare} has the following consequences as a result of forgetting some structures in the equivariant $KR$-theory. 
	\begin{enumerate}
		\item When $G$ is trivial, Corollary \ref{xsquare} then says that $x^2=0$ for any $x$ in $KR^i(X)$, where $i$ is 1 or $-3$. This special case can be proved more directly using Lemma \ref{anticlassifying}, and noting that $\delta_\mathbb{F}^{\text{inv}}(\sigma_n)^2=0$ in $KR^*(U(n), a_\mathbb{F})$, which can be obtained by applying the map $KR_{U(n), \sigma_\mathbb{F}}^*(U(n), a_\mathbb{F})\to KR^*(U(n), a_\mathbb{F})$ forgetting the equivariant structure to the equation in Proposition \ref{sqzerounitary}. 
		\item Applying the map $KR^*(U(n), a_\mathbb{F})\to K^*(U(n))$, which further forgets the Real structure, to the equation $\delta_\mathbb{F}^{\text{inv}}(\sigma_n)^2=0$, yields $\delta(\sigma_n)^2=0$ in $K^*(U(n))$. Pulling back from this `universal' equation, we then get the well-known fact that $x^2=0$ for $x\in K^{-1}(X)$. 
		\item If the involution on $X$ is trivial, then $KR^*(X)\cong KO^*(X)$, and we also have that $x^2=0$ for $x\in KO^i(X)$, where $i$ is 1 or $-3$. 
	\end{enumerate}
	However, it is not true that $x^2=0$ if $x\in KR_G^i(X)$ for $i=-1$ or $-5$. For example, if $\eta$ is the non-zero element in $KR^{-1}(\text{pt})$, then $\eta^2\neq 0$. An example where $x\in KR_G^{-5}(X)$ but $x^2\neq 0$ can be found in \cite[Corollary 4.32]{Fo}.
\end{remark}

\footnotesize\textsc{National Center for Theoretical Sciences,\\
Mathematics Division,\\
National Tsing Hua University\\
Hsinchu 30013, Taiwan\\E-mail: }\texttt{ckfok@ntu.edu.tw}\\
\textsc{URL: }\texttt{http://www.math.cornell.edu/$\sim$ckfok}

\begin{thebibliography}{9999}
	\bibitem[At]{At}  M. F. Atiyah, \emph{K-theory}, W. A. Benjamin, Inc., 1964.
	\bibitem[At2]{At2} M. F. Atiyah, \emph{$K$-theory and reality}, Quart. J. Math. Oxford (2), 17,  367-86, 1966.
	\bibitem[At3]{At3} M. F. Atiyah, \emph{Bott periodicity and the index of elliptic operators}, Quart. J. Math. Oxford (2), 19, 113-40, 1968.
	\bibitem[ABS]{ABS} M. F. Atiyah, R. Bott, A. Shapiro, \emph{Clifford modules}, Topology, Vol. 3, Suppl. 1, 3-38, Pergamon Press, 1964. 
	\bibitem[AH]{AH} M. F. Atiyah, F. Hirzebruch, \emph{Vector bundles and homogeneous spaces}, Proc. Sympos. Pure Math., Vol. III, 7-38, American Mathematical Society, Providence, R.I., 1961.
	\bibitem[AS]{AS} M. F. Atiyah, G. Segal, \emph{Equivariant K-theory and completion}, J. Differential Geometry 3,1-18, 1969.
	\bibitem[Bot]{Bot} R. Bott, \emph{The periodicity theorem for the classical groups and some of its applications}, Advances in Mathematics, Vol. 4, 353-411, 1970.	
	\bibitem[BZ]{BZ} J.-L. Brylinski, B. Zhang, \emph{Equivariant K-theory of compact connected Lie groups}, K-Theory, Volume 20, Issue 1, pp. 23-36, May 2000.
	\bibitem[Fo]{Fo} C.-K. Fok, \emph{The Real $K$-theory of compact Lie groups}, Symmetry, Integrability and Geometry: Methods and Applications, Vol. 10(022), 2014.
	\bibitem[HL]{HL} M. Harada, G. Landweber, \emph{Surjectivity for Hamiltonian $G$-spaces in $K$-theory}, Trans. Amer. Math. Soc., Vol. 359, No. 12, pp. 6001-6025, Dec. 2007.
	\bibitem[Ho]{Ho} L. Hodgkin, On the {$K$}-theory of {L}ie groups, {\textit{Topology}} \textbf{6}, 1-36, 1967.	
	\bibitem[Se]{Se} R. M. Seymour, \emph{The real $K$-theory of Lie groups and homogeneous spaces}, Quart. J. Math. Oxford (2), 24, 7-30, 1973.
\end{thebibliography}
\end{document}